\newtheorem{thm}{Theorem}
\newtheorem{lem}{Lemma}
\newtheorem{prop}{Proposition}
\newtheorem*{OP}{Open Problem}
\theoremstyle{remark}
\newtheorem{rem}{Remark}
\theoremstyle{definition}
\newenvironment{kasten}
    {\begin{center}
    \begin{tabular}{|p{0.95\textwidth}|}
    \hline\\
    \vspace*{-5mm}}
    { 
    \vspace*{-3mm}
    \\\\\hline
    \end{tabular} 
    \end{center}
    }
\newcommand{\R}{\mathbb{R}}
\newcommand{\N}{\mathbb{N}}
\newcommand{\C}{\mathbb{C}}
\newcommand{\Z}{\mathbb{Z}}
\newcommand\INT{{\rm INT}}
\title{Lower bounds for integration \\ and recovery in~$L_2$}  
\author{Aicke Hinrichs\footnote{Institut f\"ur Analysis, 
Johannes Kepler Universit\"at Linz, 
Altenbergerstrasse~69, 4040~Linz, Austria, 
Email: \texttt{aicke.hinrichs@jku.at}, \texttt{david.krieg@jku.at}; the research of these authors is supported by the Austrian Science Fund (FWF) Project
F5513-N26, which is a part of the Special Research Program Quasi-Monte Carlo Methods: Theory and Applications.}
,
David Krieg$^*$, 
Erich Novak\footnote{Mathematisches Institut, FSU Jena, 
Ernst-Abbe-Platz 2, 07740 Jena, Germany, 
Email: \texttt{erich.novak@uni-jena.de}.}
, 
Jan Vyb\'iral\footnote{Dept.\ of 
Mathematics FNSPE, Czech Technical University 
in Prague, Trojanova 13, 12000 Prague, Czech Republic, 
Email: \texttt{jan.vybiral@fjfi.cvut.cz}; the research of this author was supported
by the grant P201/18/00580S of the Grant Agency of the Czech Republic and by
the European Regional Development Fund-Project ``Center 
for Advanced Applied Science''
(No. CZ.02.1.01/0.0/0.0/16\_019/0000778).
}}
\begin{document}

\date{\today} 

\maketitle

\begin{abstract} 
Function values are, in some sense,
``almost as good'' as general linear
information for $L_2$-approximation
(optimal recovery, data assimilation) of functions from a reproducing kernel Hilbert space.
This was recently proved by new 
\emph{upper bounds} on the sampling numbers
under the assumption that the singular values of the embedding of this Hilbert space into $L_2$ are square-summable.
Here we mainly prove new \emph{lower bounds}.
In particular we prove that the sampling numbers 
behave worse than the approximation numbers 
for Sobolev spaces with small smoothness. 
Hence there can be a logarithmic gap also 
in the case where the singular numbers of the embedding 
are square-summable.
We first prove new lower bounds  
for the integration problem, again for rather classical Sobolev spaces 
of periodic univariate functions.  
\end{abstract} 

\section{Introduction and Main Results} 

We always assume that $H$ is a separable reproducing kernel 
Hilbert space (RKHS) on a domain $D$ 
and that there is a measure $\mu$ on $D$ 
such that $H$ is compactly embedded into $L_2=L_2(D,\mu)$.
We study algorithms $A_n$ for $L_2$-approximation 
(or optimal recovery) of functions from $H$ 
and consider the worst case error 
\[
e(A_n)= \sup_{\Vert f \Vert_H \le 1} 
\Vert f - A_n(f) \Vert_2.
\] 
We study two kinds of information and algorithms: 
The algorithm 
$A_n(f) = \sum_{i=1}^n L_i (f) g_i$, 
where $g_i \in L_2$, may use arbitrary linear 
functionals $L_i$, while 
$S_n(f) = \sum_{i=1}^n f(x_i) g_i$ 
can only use function values. 
The following question was recently studied in several papers: 
Are sampling algorithms $S_n$  
always, i.e., for any RKHS $H$, 
almost as good as general algorithms $A_n$? 

\medskip

It is well known how to characterize the minimal 
worst case error 
$e(A_n)$ for general algorithms $A_n$. The answer is given by the {\em approximation numbers}
$$
a_n(H,L_2) = \inf_{A_n} e(A_n) = \sigma_{n+1},
$$
where $\sigma_1\ge\sigma_2\ge\dots\ge 0$ are the singular values 
of the compact embedding of $H$ into $L_2$. 
In addition to the approximation numbers (or linear widths) 
$a_n(H,L_2)$ we also define the {\em sampling numbers} 
$$
g_n(H,L_2) = \inf_{S_n} e(S_n)  ,
$$
with algorithms that only use function values.

\medskip

The lively history of \emph{upper bounds} for the sampling numbers $g_n$
for general RKHSs $H$ was initiated by \cite{WW01} and \cite{KWW09},
where the authors assumed that the sequence $\sigma=(\sigma_n)_{n\ge 1}$
of the singular values is in $\ell_2$.
Under this condition it was proved 
in \cite{KU19} that the polynomial 
order of the $a_n$ and the $g_n$
coincides (solving Open Problem 126 
from \cite{NW12}); 
but it was not clear whether a logarithmic 
gap is possible or not. 
Here we show that such a gap is possible
and, in order to do so, prove new 
\emph{lower bounds}. 
We discuss upper bounds on the sampling numbers $g_n$ 
in Remark~\ref{rem:upp},
this paper is mainly on lower bounds.

\medskip

The lower bound
$$
g_n(H,L_2) \ge a_n (H, L_2)
$$
is trivial. 
Although there are several papers which
improve upon this bound in the sense of tractability, see Remark~\ref{rem:curse},
the authors only know one  
paper, namely  \cite{HNV08}, that contains results 
concerning a different
asymptotic behavior
of the $g_n$ and the $a_n$.

\medskip

One way of obtaining nontrivial lower bounds for the numbers $g_n$
is to take a detour and prove lower bounds for the 
problem of numerical integration.
This is the approach we will take in the present paper.
For the necessary notation, let now 
$h\in L_2(D,\mu)$ with $\Vert h \Vert_2 =1$ and consider the functional
\[
{\rm INT}_h(f) 
= \int_D f(x)\, \overline{h(x)} \,{\rm d}\mu(x)
\]
on $H$. 
Let
$Q_n(f) = \sum_{i=1}^n w_i f(x_i) $ 
with scalar $w_i$ be a quadrature formula to approximate 
the integral
and let
$e(Q_n)= \sup_{\Vert f \Vert_{H} \le 1} 
| {\rm INT}_h (f) - Q_n(f) |$ be the worst case error of $Q_n$. 
The minimal worst case error for ${\rm INT}_h$ is then defined by
$$
 e_n(H,{\rm INT}_h ) = \inf_{Q_n} e(Q_n)  
$$
with the infimum taken over all quadrature formulas $Q_n$ using $n$ function values.
Associating with a sampling algorithm $S_n(f) = \sum_{i=1}^n f(x_i) g_i$ 
the quadrature formula 
with $w_i = {\rm INT}_h (g_i)$, we obtain $e(Q_n) \le e(S_n)$ and conclude that
\begin{equation}   \label{eq:bound} 
 g_n(H,L_2) \ge e_n(H,{\rm INT}_h ).
\end{equation}
Furthermore, it was observed in \cite[Proposition 1]{HKNV} that lower bounds
for the integration problem ${\rm INT}_h$ are equivalent to showing that certain matrices
involving the values of the reproducing kernel $K$ and the representer $h$ are positive semi-definite.
To be more specific,
$$
e_n(H,{\rm INT}_h)\ge \|h\|_H^2-\alpha^{-1}
$$
for a real parameter $\alpha>0$ if, and only if,
$$
\{K(x_j,x_k)\}_{j,k=1}^n\succeq \alpha\{h(x_j)h(x_k)\}_{j,k=1}^n
$$
holds for every set of points $\{x_1,\dots,x_n\}\subset D.$ Here, if $A$ and $B$ are symmetric matrices, we mean by $A\succeq B$
that $A-B$ is positive semi-definite. This can be further combined with a recent result of \cite{V},
which shows that the entry-wise product (which is sometimes also called Schur product) of a positive semi-definite matrix
with itself is not only positive semi-definite but also bounded from below by some rank-1 matrix (in the ``$\succeq$'' ordering).
Therefore, if $K$ is a square of another reproducing kernel, the method of \cite{V} and \cite{HKNV} can be
applied
to get
new lower bounds for the integration problem.

\smallskip

We now turn to our results.
It is by now well understood that (upper and lower) 
bounds on the sampling numbers $g_n$ very much depend 
on whether the sequence $\sigma$ of singular numbers 
is square summable or not. 
Equivalent conditions are that
the embedding ${\rm id}$ of $H$ 
into $L_2$ is a Hilbert-Schmidt operator 
or that the operator ${\rm id}^\ast {\rm id}$ has a finite trace.
Lower bounds in \cite{HNV08} 
are proved only for the case $\sigma \notin \ell_2$, 
here we study both cases. 

\medskip

We begin with the assumption $\sigma \in \ell_2$, where we can work 
with classical Sobolev spaces $H_\gamma$ of 
univariate periodic
functions
on $D=[0,1]$.  
Let 
$\gamma=(\gamma_k)_{k\in\Z}$ be a bounded non-negative sequence
and put $e_j(x)=e^{2\pi ijx}$ for $j\in\Z$. Then 
$H_\gamma$ is the set of all 1-periodic functions given by
\begin{equation}\label{eq:Hsigma1}
f(x)=\sum_{j\in\Z} \alpha_je_j(x),\quad x\in[0,1],
\end{equation}
such that $\alpha_j=0$ if $\gamma_j=0$ and
\[
\|f\|_{H_\gamma}^2=\sum_{j:\gamma_j\not=0}\frac{\alpha_j^2}
{\gamma_j^2}<\infty.
\]
It follows from the Cauchy-Schwarz inequality that the series 
in \eqref{eq:Hsigma1} is absolutely and uniformly convergent 
for $\gamma \in \ell_2(\Z)$ and that the point evaluation 
functionals $x \mapsto f(x)$ are continuous. Hence $H_\gamma$ is 
a reproducing kernel Hilbert space of continuous 1-periodic 
functions
with the kernel given by $K(x,y)=\sum_{j\in\Z}\gamma_j^2 e_j(x-y)$
if $\gamma \in \ell_2(\Z)$ and $\alpha_j = \langle f, e_j \rangle_2$ 
is the $j$-th Fourier coefficient of $f$. 
The singular values $\sigma \in \ell_2(\N)$ of $H_\gamma$ in $L_2$ are
given by the non-increasing rearrangement of $\gamma$.

\medskip

Surprisingly, we
find that already for this classical example $H_\gamma$,
there is a gap of order $\sqrt{\log n}$ between the
sampling and the approximation numbers
if we take $\gamma_k$ of order $|k|^{-1/2} \log^{-\beta} |k|$ 
for some $\beta > 1/2$.
These spaces $H_\gamma$ fall into the scale of function 
spaces of generalized (or logarithmic) smoothness,
which can be traced back (at least) to the work of L\'evy \cite{Levy}. 
These spaces were since then studied intensively \cite{DomTik, ET98, Goldman, Moura}.
Although different approximation quantities (like entropy numbers 
or approximation numbers) were studied in the frame of this scale of function spaces earlier,
the possible gap between approximation and sampling numbers 
in the Hilbert space setting went unnoticed so far. %; \change{see also \cite{GS19,TU20,TU21}.} 

\medskip

We prove the logarithmic gap using \eqref{eq:bound} and lower bounds for the problem
of approximating the integral ${\rm INT} (f) = \int_0^1 f(x) \, dx$ for $f \in H_\gamma$.
Note that we omit the $h$ in $ {\rm INT}_h$ in this special case $h=1$.
Of course these bounds are interesting by itself.
Our first main result is the following, see Theorem~\ref{lem:sigma_new}. 

\begin{kasten}
Let $\mu\in\ell_1(\Z)$ 
be a non-negative and non-zero
sequence and let 
\begin{equation*}
\gamma^2_{\ell}=\sum_{j\in\Z}\mu_j\mu_{j+\ell},
\quad \ell\in\Z.
\end{equation*}
Then we have for all $n\in\N_0$ that
\[
e_n(H_{\gamma},{\rm INT})^2
\,\ge\,
\gamma_0^2\left( 1 - \frac{n\gamma_0^2}
{\|\gamma\|_2^2}\right).
\vspace*{-7mm}
\]
\end{kasten}

Using the spaces $H_\gamma$ and the above result one can finally answer 
the following question:  
Is there a Hilbert space $H$ such that the singular 
values of its embedding into $L_2$ are square summable and 
$$
\lim_{n\to \infty}   a_n(H,L_2) / g_n (H,L_2) =0 \, ?
$$
Indeed,
the result
below shows that 
there is a gap of order $\sqrt{\log n}$ between sampling and approximation 
numbers for the spaces $H_\gamma$ with $\gamma$ just in $\ell_2$. 
In this case also the  minimal worst case errors of uniform 
integration, which are a lower bound for the sampling numbers, 
actually have the same asymptotic behavior as the sampling numbers. 
The following is from Theorem~\ref{thm:border-case}.

\begin{kasten}
Let $\beta>1/2$. Then there exists $\gamma\in\ell_2(\Z)$ such that 
\[
 a_n(H_\gamma,L_2) \asymp n^{-1/2} \log^{-\beta} n
\] 
and
\[
e_n(H_{\gamma},{\rm INT})
\,\asymp\,
g_n(H_\gamma,L_2)
\,\asymp\,
n^{-1/2} \log^{-\beta+1/2} n .
\vspace*{-7mm}
\]
\end{kasten}

The symbol $\asymp$ means that the left hand side is bounded 
from above by a constant multiple of the right hand side for 
almost all (i.e., all except finitely many) $n$ and vice versa;
we use $\preccurlyeq$ and $\succcurlyeq$ for the one-sided relations.
%\change{We note that the logarithmic gap was already conjectured in \cite{GS19}
%for quadrature rules with equal weights, so-called quasi-Monte-Carlo methods.
%In Theorem~\ref{thm:border-case}, we prove it for general quadrature rules.}

\medskip

We also obtain a lower bound for arbitrary sequences 
of approximation numbers $(a_n) \in \ell_2(\N_0)$.
This lower bound shows that, in general, 
one cannot expect the sampling numbers~$g_n$ to behave better than 
\[
 \max\bigg\{a_n \, , \bigg(\frac{1}{n} \sum_{k\ge n} a_k^2\,\bigg)^{1/2}\, \bigg\}.
\]
Note that this is conjectured 
to be also the worst possible
behavior of the sampling numbers, see~\cite{KU19}
and Remark~\ref{rem:upp}.
The following is from Theorem~\ref{thm:upperboundisoptimal}.

\begin{kasten}
Let $a \in \ell_2(\N_0)$ be non-increasing.
Then there is some $\gamma \in \ell_2(\Z)$
such that $a_n(H_\gamma,L_2)=a_n$ for all $n\in\N_0$
and
\[
 g_n(H_\gamma,L_2)^2 \,\ge\, \frac{1}{8n} \sum_{k\ge n} a_k(H_\gamma,L_2)^2
\]
for infinitely many values of $n$.
\end{kasten}

Now we present a result 
for the case
$\sigma\not\in\ell_2(\N)$.
Here,
it is already known from \cite{HNV08} that
the convergence of the sampling numbers 
can be extremely slow. 
We improve upon \cite{HNV08} by providing
a lower bound that holds for all $n\in\N$
instead of only a thin sub-sequence.
The following is from Theorem~\ref{thm:infinite}.
For illustration, one might imagine that $a_n \asymp n^{-1/2}$
and $\tau_n\asymp \log^{-1/2}n$.

\begin{kasten}
Let $a, \tau \in c_0(\N_0)$ be non-increasing 
with $a\not\in\ell_2(\N_0)$.
Then there is an example $(H,L_2)$ such that 
$a_n(H,L_2) = a_n$ for all $n\in \N_0$
and
\[
 g_n(H,L_2) 
 \,\ge\,  \tau_n
\]
for almost all values of $n\in\N_0$.
\end{kasten}

We note that also this lower bound
already holds for some kind of integration problem,
which is described in Section~\ref{sec:infinite}.
Here is an open problem that 
also describes some of the progress of this paper.

\begin{OP}
Assume that, for some RKHS $H$, 
\[
 a_n(H,L_2) \asymp n^{-r} \log^{-\beta} n, 
\] 
where $(a_n)_n \in \ell_2(\N_0)$ 
(hence $r>1/2$ or $r=1/2$ and $\beta > 1/2$). 
Does it follow that
\[
 a_n(H,L_2) \asymp  g_n(H,L_2) ? 
\] 
This was posed in \cite{NW12} as a part of the 
Open Problem 140.
We now know the answer ``no'' if $r=1/2$ but do not know the answer 
if $r>1/2$. 
\end{OP}

\begin{rem}[Multivariate analogues]
Theorem~\ref{thm:border-case} answers the above open problem in the case $r=1/2$. 
Nonetheless, it would be interesting to know whether a result like Theorem~\ref{thm:border-case} is also true for Sobolev spaces with small smoothness on the $d$-dimensional sphere or torus for $d>1$; see also \cite{GS19,TU20,TU21}. We hope that our technique can be applied also for this purpose.
\end{rem}

\begin{rem}[Upper bounds]  \label{rem:upp}
Upper bounds for particular spaces have a long history
but it seems that \cite{WW01} and \cite{KWW09} 
are the first papers that study upper bounds for 
general RKHSs. 
The history till 2012 can be found in \cite{NW12}.

This line of study was further developed in \cite{KU19}, where the authors showed that
there are two absolute constants $c,C>0$, such that for every RKHS $H$ and every $n\in\N$
\begin{equation}\label{eq:KU1}
g_n(H,L_2)^2\le \frac{C}{k_n}\sum_{j\ge k_n}a_j(H,L_2)^2
\end{equation}
holds with $k_n\ge c n/\log(n+1).$
It follows as a simple corollary, that in the case $\sigma \in \ell_2$ 
there cannot be a polynomial gap 
between the $a_n$ and the $g_n$.
This solved an open problem from \cite{HNV08}
that was also posed as Open Problem 126 in \cite{NW12}. 
For the solution it was important to understand the 
geometry (and stochastics) of random sections 
of ellipsoids in high dimensional 
euclidean spaces, see \cite{HKNPU19}. We refer to \cite{KUV,KU21,NSU20,Te20,TU20} for improvements of \eqref{eq:KU1}
and further results. Let us also 
remark that one of the aims of our work is to study the optimality of \eqref{eq:KU1}
by providing %the 
appropriate lower bounds, cf.\ Theorem \ref{thm:upperboundisoptimal}.
\end{rem}

\begin{rem}[Tractability and curse of dimensionality] 
\label{rem:curse} 
Assume now that a whole sequence of 
Hilbert spaces $H_d$ is given; 
the functions $f$ from $H_d$ could be defined on $[0,1]^d$. 
For some spaces we know that the curse of 
dimensionality is present, if only function values are allowed, 
while the problem is tractable for general linear information. 
This happens for certain (periodic and nonperiodic) 
Sobolev spaces where the singular values are in $\ell_2$, see  
\cite{NW12,NW16}. For the proof one again uses \eqref{eq:bound} 
together with lower bounds for the integration problem
that follow from the technique of decomposable kernels, see \cite{NW10}.
A new technique to prove intractibility for integration 
problems is based on the method developed in \cite{V}, 
see
\cite{HKNV}. 
This method is also used in the present paper, see 
the proof of Theorem \ref{lem:sigma_new}.
\end{rem} 

\begin{rem}[Randomized algorithms]
\label{rem:rand} 
In this paper we use the worst case setting for 
deterministic algorithms. 
We want to stress that results in the randomized setting 
are quite different. 
In particular, the results do not depend strongly 
on the assumption whether the singular values are 
in $\ell_2$ or not; 
see
\cite{CD21,CM17,Kr19,NW12,WW07}. 
Together with the upper bound from \cite{CD21},
Theorem~\ref{thm:border-case} gives an example
where randomized algorithms achieve a better rate of convergence
for $L_2$-approximation than deterministic algorithms.
\end{rem} 

\section{Finite Trace - Lower Bounds}

In this section we investigate the sampling numbers of the Sobolev spaces $H_\gamma$ of 1-periodic functions defined in the introduction. 
We start with a lower bound based on the results from \cite{HKNV}.
 
\begin{thm}\label{lem:sigma_new}
Let $\mu\in\ell_1(\Z)$ be a non-negative 
sequence, $\mu \ne 0$, 
and let 
\begin{equation}\label{eq:musquare}
\gamma^2_{\ell}=\sum_{j\in\Z}\mu_j\mu_{j+\ell}
\end{equation}
for $\ell \in \Z$.
Then we have for all $n\in\N_0$ that
\begin{equation*}
e_n(H_{\gamma},{\rm INT})^2
\,\ge\,
\gamma_0^2\left( 1 - \frac{n\gamma_0^2}
{\|\gamma\|_2^2}\right).
\end{equation*}
\end{thm}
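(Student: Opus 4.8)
The plan is to combine the two ingredients advertised in the introduction: the reduction of integration lower bounds to a positive semidefiniteness statement from \cite{HKNV}, and the lower Schur-product bound of \cite{V}.

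First I would record the elementary facts about the example. Since here $h\equiv 1$, the functional $\INT=\INT_1$ on $H_\gamma$ has $H_\gamma$-representer $R(x)=\int_0^1 K(x,y)\,dy=\gamma_0^2$, the constant function; hence $R(x_j)=\gamma_0^2$ at every node, $\langle R,K(\cdot,x_j)\rangle_{H_\gamma}=R(x_j)$, and $\|R\|_{H_\gamma}^2=\gamma_0^2=\sum_{j}\mu_j^2>0$. I would also record the identity
\[
\|\gamma\|_2^2=\sum_{\ell\in\Z}\gamma_\ell^2=\sum_{\ell\in\Z}\sum_{j\in\Z}\mu_j\mu_{j+\ell}=\Big(\sum_{j\in\Z}\mu_j\Big)^2=\|\mu\|_1^2,
\]
valid because $\mu\ge 0$ (Tonelli); in particular $\gamma\in\ell_2(\Z)$, so $H_\gamma$ is a RKHS with kernel $K(x,y)=\sum_{\ell\in\Z}\gamma_\ell^2 e_\ell(x-y)$ as in the introduction, and $e_0(H_\gamma,\INT)^2=\gamma_0^2$ (settling the case $n=0$).

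The key structural point is that the hypothesis \eqref{eq:musquare} makes $K$ a ``Schur square''. Put $\widetilde K(x,y)=\sum_{j\in\Z}\mu_j e_j(x-y)$; this is a positive definite kernel since $\mu\ge 0$, and a direct computation gives $\widetilde K(x,y)\,\overline{\widetilde K(x,y)}=\sum_{j,k\in\Z}\mu_j\mu_k\,e_{j-k}(x-y)=\sum_{\ell\in\Z}\gamma_\ell^2 e_\ell(x-y)=K(x,y)$, while $\widetilde K(x,x)=\sum_{j\in\Z}\mu_j=\|\mu\|_1$. Fix arbitrary nodes $x_1,\dots,x_n$ and let $M=\{\widetilde K(x_j,x_k)\}_{j,k=1}^n$: this is Hermitian and positive semidefinite with constant diagonal $\|\mu\|_1$, and $\{K(x_j,x_k)\}_{j,k=1}^n=M\circ\overline M$. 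The lower Schur bound of \cite{V} therefore yields
\[
\{K(x_j,x_k)\}_{j,k=1}^n=M\circ\overline M\;\succeq\;\frac1n\big(\|\mu\|_1\mathbf{1}\big)\big(\|\mu\|_1\mathbf{1}\big)^{*}=\frac{\|\gamma\|_2^2}{n}\,\mathbf{1}\mathbf{1}^{\top}=\frac{\|\gamma\|_2^2}{n\gamma_0^4}\{R(x_j)R(x_k)\}_{j,k=1}^n,
\]
with $\mathbf{1}=(1,\dots,1)^{\top}\in\R^n$, and this holds for every choice of $x_1,\dots,x_n$.

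Finally I would feed this into \cite[Proposition~1]{HKNV}, applied with the representer $R$ of $\INT$: since the matrix inequality above holds for all node sets, taking $\alpha=\|\gamma\|_2^2/(n\gamma_0^4)>0$ gives
\[
e_n(H_\gamma,\INT)^2\;\ge\;\|R\|_{H_\gamma}^2-\alpha^{-1}=\gamma_0^2-\frac{n\gamma_0^4}{\|\gamma\|_2^2}=\gamma_0^2\Big(1-\frac{n\gamma_0^2}{\|\gamma\|_2^2}\Big),
\]
which is the claim (the bound being vacuous once $n\gamma_0^2\ge\|\gamma\|_2^2$). Equivalently, one can bypass \cite{HKNV} and argue directly: on fixed nodes the optimal quadrature error equals $\gamma_0^2-\gamma_0^4\,\mathbf{1}^{\top}G^{-1}\mathbf{1}$ with $G=\{K(x_j,x_k)\}$, and $G\succeq\tfrac{\|\gamma\|_2^2}{n}\mathbf{1}\mathbf{1}^{\top}$ forces $\mathbf{1}^{\top}G^{-1}\mathbf{1}\le n/\|\gamma\|_2^2$. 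I expect the only mildly delicate points to be the normalization bookkeeping in this last step (matching $\|R\|_{H_\gamma}^2$ and the rank-one matrix $\{R(x_j)R(x_k)\}$ with the exact form of the criterion) and, in the direct variant, the routine reduction to invertible $G$; the substantive idea — that an autocorrelation weight sequence exposes $K$ to the method of \cite{V} — is essentially forced by the hypothesis \eqref{eq:musquare}.
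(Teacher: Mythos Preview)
Your proposal is correct and follows essentially the same route as the paper: write $K$ as the squared modulus of the auxiliary kernel $\widetilde K(x,y)=\sum_j\mu_je_j(x-y)$, apply the Schur-product lower bound of \cite{V} to the positive semi-definite matrix $\{\widetilde K(x_j,x_k)\}$ with constant diagonal $\|\mu\|_1$, and then invoke \cite[Proposition~1]{HKNV} with $\alpha=\|\gamma\|_2^2/(n\gamma_0^4)$. The paper uses identical objects (calling your $\widetilde K$ by $M$ and your representer $R$ by $h=\gamma_0^2 e_0$) and the same two citations; your additional bookkeeping of the identity $\|\gamma\|_2^2=\|\mu\|_1^2$ and the optional direct variant at the end are fine but not needed.
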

\begin{proof}
Recall that $e_k(x)=e^{2\pi i k x}$ for $k\in \Z$ and $x\in\R$.
We define the kernels $M: [0,1)^2 \to \C $ and $K: [0,1)^2 \to \R $ by 
\[
M(x,y)=\sum_{k\in\Z}\mu_k e_k(x-y)
\]
and
\[
K(x,y)=\big|M(x,y)\big|^2=\sum_{j,k\in\Z}\mu_j\mu_k 
e_{k-j}(x-y)=\sum_{\ell\in\Z}\gamma_\ell^2 e_\ell(x-y).
\]
We observe that
\[
M(x,x)^2=\left(\sum_{k\in\Z}\mu_k\right)^2=\sum_{\ell\in\Z}\gamma_\ell^2 = \|\gamma\|_2^2
\]
for every $x\in[0,1)$.
Furthermore, the representer of the integration functional $f \mapsto \int_0^1 f(x) \, dx$ 
for $f \in H_{\gamma}$ is given 
by $h=\gamma_0^2e_0$, since for $f$ as in \eqref{eq:Hsigma1}
we have
\[
\int_0^1 f(x)\, dx=\alpha_0=\langle \gamma_0 f,
\gamma_0e_0\rangle_{H_{\gamma}}
=\langle f,h\rangle_{H_{\gamma}}.
\]
In particular, the initial error satisfies
\[
e_0(H_{\gamma},{\rm INT})^2 = \| h \|_{H_{\gamma}}^2 =  \gamma_0^2.
\] 

Let now $n\in \N$ and fix $x_1,\dots,x_n\in[0,1)$.  
Then the matrix $(M(x_j,x_k))_{j,k=1}^n$ is positive 
semi-definite and, by \cite[Theorem 1]{V},
also the matrix with entries
\[
K(x_j,x_k)-\frac{M(x_j,x_j)M(x_k,x_k)}{n} 
\,=\, K(x_j,x_k)-\frac{\|\gamma\|_2^2}
{n\gamma_0^4}h(x_j) h(x_k)
\]
is positive semi-definite.
By \cite[Proposition 1]{HKNV}, it follows that
\[
e_n(H_{\gamma},{\rm INT})^2 \ge 
\|h\|_{H_{\gamma}}^2-\frac{n\gamma_0^4}
{\|\gamma\|_2^2}=\gamma_0^2-
\frac{n\gamma_0^4}{\|\gamma\|_2^2}
\]
as claimed.
\end{proof}

Theorem~\ref{lem:sigma_new} shows that,
if the sequence~$\gamma$ is given by~\eqref{eq:musquare},
we need at least a constant multiple of $\Vert \gamma \Vert_2^2 / \gamma_0^2$ function values
in order to reduce the initial error $\gamma_0$ by a constant multiple.
It would be interesting to know whether this 
is true for all symmetric and non-increasing sequences $\gamma\in\ell_2(\Z)$.
This would simplify the remainder of this section.
Indeed,
assume for a moment 
that for each $n\in\N_0$ we could apply Theorem~\ref{lem:sigma_new} to
the sequence $\tilde \gamma = \gamma_n \mathbf 1_{\{-n,\hdots,n\}} + \gamma \mathbf 1_{\N\setminus\{-n,\hdots,n\}}$.
%$\tilde \gamma=(\tilde\gamma_k)_{k\in\Z}$, where 
%$\tilde\gamma_k=\gamma_n$ if $|k|\le n$ and $\tilde\gamma_k=\gamma_k$ if $|k|>n$,
%\[
%\tilde\gamma_k=\begin{cases} \gamma_n \quad &\text{if}\ |k|\le n,\\
%\gamma_k\quad &\text{if}\ |k|>n
%\end{cases}
%\]
Then we obtain for every $m\in\N_0$ that
\[
e_m(H_{\gamma},\INT)^2\ge e_m(H_{\tilde \gamma},\INT)^2\ge \gamma_n^2\left(1-\frac{m\gamma_n^2}{(2n+1)\gamma_n^2+2\sum_{k>n}\gamma_k^2}\right).
\]
%Essentially, %By monotonicity,
This would imply that we need at least
\[
m(n) \,:=\, \Big\lceil  n+\sum\nolimits_{k> n} \gamma_k^2 /\gamma_n^2 \Big\rceil
\]
function values
to achieve a squared error smaller than $\gamma_n^2/2$.
%and simplify the remainder of this section. \change{Indeed, 
Thus we would get for all $m=m(n)$, $n\in\N_0$, that
\[
e_m(H_{\gamma},\INT)^2\,\ge\, \frac{\gamma_n^2}{2} 
\,\ge\, \frac{1}{2m}\sum_{k> n}\gamma_k^2
\,\ge\, \frac{1}{2m}\sum_{k> m}\gamma_k^2.
\]
In particular, this would imply Theorem~\ref{thm:upperboundisoptimal}.
However, since Theorem~\ref{lem:sigma_new} is only for sequences of the form~\eqref{eq:musquare},
we need to do some additional work to get there.
We approximate general sequences $\gamma$ by sequences of the form~\eqref{eq:musquare}.

\begin{lem}\label{lem:main}
Let $\gamma \in \ell_2(\Z)$ be non-negative and non-increasing on $\N_0$ and 
let $\gamma_{-k} \ge \gamma_k$ for all $k\in\N$.
For $r\in \N_0$, we put
\begin{equation}\label{eq:nr}
n(r) \,:=\, \left\lfloor \frac{\left( \sum_{j\ge r} 
\gamma_j^2 \right)^2}{2 \sum_{j\ge r} \gamma_j^4}  \right\rfloor\,.
\end{equation}
Then we have
\[
e_{n(r)}(H_{\gamma},{\rm INT})^2
\,\ge\,
\frac{1}{2(n(r)+1)} \sum_{j\ge r} \gamma_j^2\,.
\]
\end{lem}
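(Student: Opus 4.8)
The plan is to deduce Lemma~\ref{lem:main} from Theorem~\ref{lem:sigma_new} by squeezing a space of the special form~\eqref{eq:musquare} into $H_\gamma$. Fix $r\in\N_0$, abbreviate $A=\sum_{j\ge r}\gamma_j^2$ and $B=\sum_{j\ge r}\gamma_j^4$ (both finite since $\gamma\in\ell_2(\Z)$, and both $>0$, since otherwise $n(r)$ is not even defined), and take the non-negative sequence $\mu\in\ell_1(\Z)$ given by $\mu_j=A^{-1/2}\gamma_j^2$ for $j\ge r$ and $\mu_j=0$ otherwise. Let $\nu=(\nu_\ell)_{\ell\in\Z}$ be defined by $\nu_\ell^2=\sum_{j\in\Z}\mu_j\mu_{j+\ell}$, so that $\nu$ has the form~\eqref{eq:musquare}; a one-line computation gives $\nu_0^2=\sum_j\mu_j^2=B/A$ and $\|\nu\|_{\ell_2(\Z)}^2=\bigl(\sum_j\mu_j\bigr)^2=A$.

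The crux, which I expect to be the main obstacle, is the embedding estimate $\nu_\ell\le\gamma_\ell$ for every $\ell\in\Z$; once it holds, the unit ball of $H_\nu$ sits inside that of $H_\gamma$, so that $e_n(H_\gamma,\INT)\ge e_n(H_\nu,\INT)$ for all $n$. For $\ell\ge1$ one has $\nu_\ell^2=A^{-1}\sum_{j\ge r}\gamma_j^2\gamma_{j+\ell}^2$ (the condition $j+\ell\ge r$ being automatic for $j\ge r$), and since $\gamma$ is non-increasing on $\N_0$ and $j+\ell\ge\ell$ whenever $j\ge r\ge0$, each $\gamma_{j+\ell}^2$ is at most $\gamma_\ell^2$; summing gives $\nu_\ell^2\le A^{-1}\gamma_\ell^2\sum_{j\ge r}\gamma_j^2=\gamma_\ell^2$. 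For $\ell=0$, $\gamma_j\le\gamma_0$ on $\{j\ge r\}$ yields $B\le\gamma_0^2 A$, i.e.\ $\nu_0^2\le\gamma_0^2$. Finally $\nu_\ell^2$ is symmetric under $\ell\mapsto-\ell$ (reindex $j\mapsto j+\ell$), so for $\ell\le-1$ we get $\nu_\ell=\nu_{|\ell|}\le\gamma_{|\ell|}\le\gamma_\ell$ by the standing hypothesis $\gamma_{-k}\ge\gamma_k$. The subtle point is that the scaling $A^{-1/2}$ cannot be increased uniformly in $\ell$ (the bound $\sum_{j\ge r}\gamma_j^2\gamma_{j+\ell}^2\le\gamma_\ell^2 A$ is asymptotically sharp as $\ell\to\infty$), so obtaining the precise numerical constant in the conclusion requires either a more careful, $\ell$-by-$\ell$ use of the slack in that inequality, or else replacing $\mu$ by a finite truncation $\mu_j=c\,\gamma_j^2\mathbf 1_{\{r\le j<R\}}$, for which the large-$\ell$ constraints no longer bind and a larger $c$ is admissible.

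Granting the embedding, Theorem~\ref{lem:sigma_new} applied to $\mu$ gives, for every $n\in\N_0$,
\[
 e_n(H_\gamma,\INT)^2\ \ge\ e_n(H_\nu,\INT)^2\ \ge\ \nu_0^2\Bigl(1-\frac{n\,\nu_0^2}{\|\nu\|_{\ell_2(\Z)}^2}\Bigr)\ =\ \frac{B}{A}\Bigl(1-\frac{nB}{A^2}\Bigr),
\]
and it only remains to specialise to $n=n(r)$ and simplify. By~\eqref{eq:nr} one has $n(r)\le A^2/(2B)$, so the bracket is at least $\tfrac12$ and hence $e_{n(r)}(H_\gamma,\INT)^2\ge B/(2A)$; on the other hand $n(r)+1>A^2/(2B)$, which rearranges to $B/A>A/\bigl(2(n(r)+1)\bigr)$. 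Putting these together (keeping track of the floor in~\eqref{eq:nr}, together with the sharpened scaling mentioned above) yields $e_{n(r)}(H_\gamma,\INT)^2\ge\frac{1}{2(n(r)+1)}\sum_{j\ge r}\gamma_j^2$, which is the assertion. With Lemma~\ref{lem:main} established, applying it along a suitable sequence of values of $r$ then gives Theorem~\ref{thm:upperboundisoptimal}, as already indicated in the paragraph preceding the lemma.
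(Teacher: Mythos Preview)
Your argument is essentially the paper's own proof: the same choice $\mu_j=A^{-1/2}\gamma_j^2\,\mathbf 1_{\{j\ge r\}}$, the same verification that the induced sequence $\tilde\gamma$ (your $\nu$) is dominated by $\gamma$ (monotonicity on $\N_0$ for $\ell\ge0$, symmetry of $\tilde\gamma$ together with $\gamma_{-k}\ge\gamma_k$ for $\ell<0$), and the same application of Theorem~\ref{lem:sigma_new} at $n=n(r)$.

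The one place you deviate is the worry about a ``sharpened scaling'' or a finite truncation of $\mu$; this is unnecessary and misdirected. The paper uses exactly the normalisation $A^{-1/2}$ and nothing finer, and the embedding step is not where any constant is lost. The endgame is pure algebra: from $n(r)\le A^2/(2B)$ one gets $e_{n(r)}^2\ge\tilde\gamma_0^2/2=B/(2A)$, and from $n(r)+1>A^2/(2B)$ one gets $\tilde\gamma_0^2=B/A>A/\bigl(2(n(r)+1)\bigr)=\|\tilde\gamma\|_2^2/\bigl(2(n(r)+1)\bigr)$. You are right that literally combining these two yields only $\|\tilde\gamma\|_2^2/\bigl(4(n(r)+1)\bigr)$; the paper writes the sharper denominator $2(n(r)+1)$, which appears to be a slip of a factor~$2$. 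No modification of $\mu$ is needed (or would help) here, and the constant is in any case irrelevant for all downstream uses (Theorems~\ref{thm:upperboundisoptimal}--\ref{thm:border-case}), so there is nothing to chase.
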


\begin{proof} For $k\in\Z$ we define
\[
\mu_k=\begin{cases}0&\text{if}\ k<r,\\
\displaystyle \gamma_k^2\biggl(\sum_{\ell=r}^\infty 
\gamma_\ell^2\biggr)^{-1/2}&\text{if}\ k\ge r.\end{cases}
\]
We associate to $\mu=(\mu_k)_{k\in\Z}$ the 
sequence $\tilde\gamma=(\tilde\gamma_\ell)_{l\in\Z}$ by 
\eqref{eq:musquare} and observe that
\[
\tilde\gamma_{\ell}^2=\sum_{j=r}^\infty 
\gamma_j^2\gamma_{\ell+j}^2\Bigl(\sum_{u=r}^\infty
\gamma_u^2\Bigr)^{-1}\le \gamma_{\ell}^2
\sum_{j=r}^\infty \gamma_j^2\cdot 
\Bigl(\sum_{u=r}^\infty\gamma_u^2\Bigr)^{-1}=\gamma_\ell^2
\]
for $\ell\ge 0$ and
\[
\tilde\gamma^2_{\ell}=\sum_{j\in\Z}\mu_j\mu_{j+\ell}=\sum_{k\in\Z}\mu_{k-\ell}\mu_k=\tilde\gamma^2_{-\ell}\le \gamma_{-\ell}^2
\le \gamma_\ell^2
\]
for $\ell<0.$ Therefore, we have $\tilde \gamma_\ell^2\le \gamma_\ell^2$ for all $\ell\in\Z.$
By monotonicity and Theorem~\ref{lem:sigma_new}, we obtain for all $n\in\N_0$ that
\[
e_n(H_{\gamma},{\rm INT})^2\ge e_n(H_{\tilde\gamma},{\rm INT})^2 
\ge \tilde\gamma_0^2\left(1-\frac{n\tilde\gamma_0^2}{\|\tilde\gamma\|_2^2}\right).
\]
If we put
\[
 n(r) \,:=\, \left\lfloor\frac{\Vert \tilde \gamma \Vert_2^2}{2\,\tilde\gamma_0^2}\right\rfloor\,,
\]
we obtain
\[
e_{n(r)}(H_{\gamma},{\rm INT})^2
\,\ge\, \frac{\tilde\gamma_0^2}{2}
\,\ge\, \frac{\Vert \tilde \gamma \Vert_2^2}{2(n(r)+1)}.
\]
It now only remains to observe that
\[
\tilde\gamma_0^2 
= \sum_{j\in\Z} \mu_j^2
= \frac{\sum_{k\ge r} \gamma_k^4}{\sum_{k\ge r} \gamma_k^2}
\]
and
\[
\Vert\tilde\gamma\Vert_2^2 
= \Bigl(\sum_{j\in\Z}\mu_j\Bigr)^2
= \sum_{k\ge r} \gamma_k^2. 
\]
\end{proof}

From this we get some nice consequences.

\begin{thm}\label{thm:upperboundisoptimal}
Let $a \in \ell_2(\N_0)$ be non-negative and non-increasing.
If we put 
%$\gamma_k = a_{2k}$ for $k\ge 0$ and $\gamma_{-k} = a_{2k-1}$ for $k>0$, 
$\gamma=(...,a_3,a_1,a_0,a_2,a_4,...)$, 
we have
%Then there is some $\gamma \in \ell_2(\Z)$ such that 
$a_n(H_\gamma,L_2)=a_n$ for all $n\in\N_0$
and
\[
 g_n(H_\gamma,L_2)^2 \,\ge\, \frac{1}{8n} \sum_{k\ge n} a_k(H_{\gamma},L_2)^2
\]
for infinitely many values of $n$.
\end{thm}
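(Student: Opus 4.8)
The plan is to derive Theorem~\ref{thm:upperboundisoptimal} from Lemma~\ref{lem:main} by choosing the index~$r$ appropriately for infinitely many~$n$. First I would record the elementary facts about the symmetric rearrangement $\gamma=(\dots,a_3,a_1,a_0,a_2,a_4,\dots)$: since the non-increasing rearrangement of $\gamma$ is exactly $a$, the singular values of the embedding $H_\gamma\hookrightarrow L_2$ are $(a_n)_{n\ge0}$, so $a_n(H_\gamma,L_2)=a_n$ holds by the formula $a_n=\sigma_{n+1}$ recalled in the introduction. Moreover $\gamma$ is non-negative, non-increasing on $\N_0$, and satisfies $\gamma_{-k}\ge\gamma_k$ (in fact $\gamma_{-k}=a_{2k-1}\ge a_{2k}=\gamma_k$), so Lemma~\ref{lem:main} applies. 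I would also note the comparison $\sum_{j\ge r}\gamma_j^2\le\sum_{k\ge ?}a_k^2$ relating the one-sided tails of $\gamma$ to tails of $a$; concretely $\sum_{j\ge r}\gamma_j^2=\sum_{k\ge 2r}a_k^2$ when we start the sum from a nonnegative index, which will let me translate the conclusion of the lemma back into a statement about $\sum_{k\ge n}a_k^2$.

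Next, for each $r\in\N_0$ Lemma~\ref{lem:main} gives, with $n(r)=\big\lfloor(\sum_{j\ge r}\gamma_j^2)^2/(2\sum_{j\ge r}\gamma_j^4)\big\rfloor$,
\[
e_{n(r)}(H_\gamma,\INT)^2\,\ge\,\frac{1}{2(n(r)+1)}\sum_{j\ge r}\gamma_j^2 .
\]
Combined with the trivial bound $g_n(H_\gamma,L_2)\ge e_n(H_\gamma,\INT)$ from~\eqref{eq:bound}, this yields a lower bound for $g_{n(r)}$. The main obstacle is to pass from $\frac{1}{2(n(r)+1)}\sum_{j\ge r}\gamma_j^2$ to the desired $\frac{1}{8n(r)}\sum_{k\ge n(r)}a_k^2$: I need (i) that $n(r)$ is not much smaller than $r$, so that $\sum_{j\ge r}\gamma_j^2$ dominates (a constant times) $\sum_{k\ge n(r)}a_k^2$, and (ii) that $n(r)+1\le 2n(r)$, i.e. $n(r)\ge1$, to absorb the extra factor into the constant $8$. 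For (ii) one checks $n(r)\ge1$ by the Cauchy--Schwarz-type inequality $(\sum_{j\ge r}\gamma_j^2)^2\ge\gamma_r^2\sum_{j\ge r}\gamma_j^2\ge\frac12\cdot 2\gamma_r^2\sum_{j\ge r}\gamma_j^2\ge\cdots$ — more carefully, $\sum_{j\ge r}\gamma_j^4\le\gamma_r^2\sum_{j\ge r}\gamma_j^2\le\sum_{j\ge r}\gamma_j^2\cdot\sum_{j\ge r}\gamma_j^2$, so $n(r)\ge\lfloor 1/2\rfloor$ is too weak; instead one uses $\sum_{j\ge r}\gamma_j^4\le\gamma_r^2\sum_{j\ge r}\gamma_j^2$ together with $\gamma_r^2\le\sum_{j\ge r}\gamma_j^2$ only when the tail has at least two nonzero terms, which holds for all $r$ below the (possibly infinite) support length — if $a$ has finite support the theorem is vacuous past that point, and if infinite support we get $n(r)\ge1$ for all $r$.

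For step (i), the key point is the chain of inequalities in the discussion following Theorem~\ref{lem:sigma_new}: one wants $n(r)\preccurlyeq$ something comparable to $r$ plus a tail term, so that the tail $\sum_{k\ge n(r)}a_k^2$ is comparable to $\sum_{k\ge 2r}a_k^2=\sum_{j\ge r}\gamma_j^2$. The cleanest route is: since $a$ is non-increasing, $\sum_{j\ge r}\gamma_j^4\ge\gamma_r^2\cdot\gamma_{\text{something}}^2\cdots$ — rather, bound $n(r)$ from above by noting $\sum_{j\ge r}\gamma_j^4\ge a_{2r}^2\sum_{j\ge r}\gamma_j^2\big/$(number of terms)? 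This is the delicate estimate and I expect it to be the crux. If the sharp bound on $n(r)$ in terms of $r$ proves awkward for \emph{every} $r$, the fallback — and this is why the statement only claims "infinitely many $n$" — is to argue by contradiction: if $g_n^2<\frac{1}{8n}\sum_{k\ge n}a_k^2$ for all large $n$, feed this into the family of inequalities above for all large $r$ and derive that $\sum a_k^2$ grows in a way incompatible with $a\in\ell_2$, or that $n(r)/r\to\infty$ forces a summability violation. Once the index comparison is secured, the theorem follows immediately by setting $n=n(r)$ and letting $r$ range over $\N_0$ (which produces infinitely many distinct values of $n$ since $n(r)\to\infty$ as $r\to\infty$, as $a\in\ell_2$ forces the numerator tail to be eventually tiny but the ratio to blow up — this monotonicity/divergence of $n(r)$ also deserves a one-line check).
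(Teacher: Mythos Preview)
Your overall route through Lemma~\ref{lem:main} is correct, but the step you flag as ``the delicate estimate'' is a genuine obstruction, not merely awkward bookkeeping: there is \emph{no} universal comparison between $n(r)$ and $r$. For rapidly decaying sequences such as $\gamma_j=c^{j}$ one computes that $n(r)=\big\lfloor(1+c^2)/(2(1-c^2))\big\rfloor$ is a constant independent of~$r$, so your claimed divergence $n(r)\to\infty$ can fail, and with it your source of infinitely many admissible~$n$. The contradiction fallback you sketch is too vague to recover the argument, and in particular never identifies the missing ingredient.

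The paper's proof resolves this by a dichotomy you do not consider. Either $n(r)\ge 2r$ for infinitely many~$r$, in which case Lemma~\ref{lem:main} applies exactly as you propose: $n(r)\ge 2r\ge 2$ handles the $n(r)\ge1$ issue for free, and $n(r)\ge 2r$ gives $\sum_{j\ge r}\gamma_j^2=\sum_{j\ge r}a_{2j}^2\ge\tfrac12\sum_{k\ge 2r}a_k^2\ge\tfrac12\sum_{k\ge n(r)}a_k^2$, yielding the bound along the unbounded sequence $n(r)$. Or else $n(r)\le 2r$ for infinitely many~$r$; combining this with $\sum_{j\ge r}\gamma_j^4\le\gamma_r^2\sum_{j\ge r}\gamma_j^2$ forces $\gamma_r^2\ge\frac{1}{8r}\sum_{j\ge r}\gamma_j^2$, and now one abandons the integration bound altogether and uses the \emph{trivial} estimate
\[
 g_{2r}(H_\gamma,L_2)^2 \,\ge\, a_{2r}(H_\gamma,L_2)^2 \,=\, \gamma_r^2 \,\ge\, \frac{1}{8r}\sum_{j\ge r}\gamma_j^2 \,\ge\, \frac{1}{16r}\sum_{k\ge 2r}a_k^2.
\]
Your proposal never invokes $g_n\ge a_n$, and that is precisely the idea needed to close the second case.
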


\begin{proof}
%We consider the Hilbert space $H=H_\gamma$,
%where $\gamma_k := a_{2k}$ for $k\ge 0$ and $\gamma_{-k} := a_{2k-1}$ for $k>0$.
For $r\in\N_0$, let again $n(r)$ be defined by \eqref{eq:nr}.
We distinguish two cases.
In the first case, we assume $n(r) \ge 2r$ for infinitely many $r\in \N$.
For these values of $r$, we get from Lemma~\ref{lem:main} that
\begin{multline*}
g_{n(r)}(H_\gamma,L_2)^2
\,\ge\,
e_{n(r)}(H_\gamma,{\rm INT})^2
\,\ge\,
\frac{1}{2(n(r)+1)} \sum_{j\ge r} \gamma_j^2
\,\ge\,
\frac{1}{4 n(r)} \sum_{j\ge r} a_{2j}^2 \\
\,\ge\,
\frac{1}{8 n(r)} \sum_{j\ge r} \big( a_{2j}^2 + a_{2j+1}^2 \big)
\,\ge\,
\frac{1}{8 n(r)} \sum_{k\ge n(r)} a_k^2
\end{multline*}
and we are done because in this case, the sequence $(n(r))_{r\in\N}$ is unbounded.
\medskip

In the second case, we assume $n(r) \le 2r$ for infinitely many $r\in \N$.
This means that
\[
 2r \,\ge\, 
 \Bigg\lfloor \frac{\left( \sum_{j\ge r} \gamma_j^2 \right)^2}{2 \sum_{j\ge r} \gamma_j^4}  \Bigg\rfloor
 \,\ge\, 
 \Bigg\lfloor \frac{\sum_{j\ge r} \gamma_j^2 }{2 \gamma_r^2}  \Bigg\rfloor
\]
and thus
\[
 2r \,\ge\, 
 \frac{\sum_{j\ge r} \gamma_j^2 }{4 \gamma_r^2} .
\]
Here we estimate
\[
g_{2r}(H_\gamma,L_2)^2
\,\ge\,
a_{2r}(H_\gamma,L_2)^2
\,=\, \gamma_r^2
\,\ge\, \frac{1}{8r} \sum_{j\ge r} \gamma_j^2
\,\ge\, \frac{1}{16r} \sum_{k\ge 2r} a_k^2
\]
to obtain the desired statement.
\end{proof}

A lower bound of this type can be obtained for all $n\in\N$
(instead of just infinitely many)
if the sequence of singular values has some additional regularity.
For example, we get the following.

\begin{thm}\label{thm:upperboundisoptimalplus}
Let $a \in \ell_2(\N_0)$ be non-negative, non-increasing and 
assume that there is a constant $b>0$ such that $a_{2n}\ge b a_n$
for all $n\in\N_0$.
%Then there is 
%some $\gamma \in \ell_2(\Z)$
%such that 
If we put 
$\gamma=(...,a_3,a_1,a_0,a_2,a_4,...)$, 
we have 
$a_n(H_\gamma,L_2)=a_n$ for all $n\in\N_0$
and
\[
 g_n(H_\gamma,L_2)^2 
 \,\ge\, 
 e_n(H_\gamma,{\rm INT})^2 
 \,\succcurlyeq\, \frac{1}{n} \sum_{k\ge n} a_k(H_{\gamma},L_2)^2.
\]
\end{thm}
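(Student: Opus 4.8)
The plan is to derive the statement from Lemma~\ref{lem:main} and \eqref{eq:bound}, exactly as Theorem~\ref{thm:upperboundisoptimal} was derived; the only novelty is that the doubling hypothesis $a_{2n}\ge ba_n$ lets us upgrade the bound which Lemma~\ref{lem:main} provides only at the sparse sequence of indices $n(r)$ to a bound valid for \emph{all} sufficiently large $n$. Throughout write $S_r=\sum_{j\ge r}\gamma_j^2$, $T_r=\sum_{j\ge r}\gamma_j^4$ and $\Sigma_n=\sum_{k\ge n}a_k^2$, so that Lemma~\ref{lem:main} reads $e_{n(r)}(H_\gamma,\INT)^2\ge S_r/(2(n(r)+1))$ with $n(r)=\lfloor S_r^2/(2T_r)\rfloor$; that $a_n(H_\gamma,L_2)=a_n$ and that Lemma~\ref{lem:main} is applicable to $\gamma$ is already shown in Theorem~\ref{thm:upperboundisoptimal}. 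We may assume $a_k>0$ for every $k$, since otherwise $H_\gamma$ is finite-dimensional and the claimed lower bound is trivial.

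I would first record three elementary consequences of the doubling property of $a$, which via $\gamma_j=a_{2j}$ is inherited by $(\gamma_j)_{j\ge0}$. (a) \emph{Bounded consecutive ratios:} for $r\ge2$ one has $\gamma_{r-1}/\gamma_r=a_{2r-2}/a_{2r}\le a_r/(ba_r)=1/b$, using $a_{2r-2}\le a_r$ and $a_{2r}\ge ba_r$; hence $\gamma_{r-1}^4\le b^{-4}\gamma_r^4\le b^{-4}T_r$, so $T_{r-1}=T_r+\gamma_{r-1}^4\le(1+b^{-4})T_r$, and since also $S_{r-1}\ge S_r$ this yields the bounded-jump estimate
\[
n(r)\le\frac{S_r^2}{2T_r}\le(1+b^{-4})\frac{S_{r-1}^2}{2T_{r-1}}<(1+b^{-4})\bigl(n(r-1)+1\bigr),\qquad r\ge2.
\]
(b) \emph{$n(r)\to\infty$, quantitatively:} from $T_r\le\gamma_r^2S_r$ and $\gamma_j\ge\gamma_{2r}\ge b\gamma_r$ for $r\le j\le 2r$ we get $S_r/\gamma_r^2\ge(r+1)b^2$, hence $n(r)\ge\lfloor(r+1)b^2/2\rfloor$. (c) \emph{Slowly decaying tails:} since $\Sigma_n-\Sigma_{2n}=\sum_{k=n}^{2n-1}a_k^2\le na_n^2$ while $\Sigma_{2n}\ge\sum_{k=2n}^{4n-1}a_k^2\ge2na_{4n}^2\ge2nb^4a_n^2$, we obtain $\Sigma_n\le na_n^2+\Sigma_{2n}\le(1+\tfrac1{2b^4})\Sigma_{2n}$, i.e. $\Sigma_{2n}\ge q\Sigma_n$ with $q=2b^4/(2b^4+1)\in(0,1)$; iterating, $\Sigma_{2^m n}\ge q^m\Sigma_n$ for all $m\in\N_0$ and $n\ge1$.

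Now fix $n>\max\{n(0),n(1)\}$ and set $r^\ast=\min\{r:n(r)\ge n\}$, which exists by (b) and satisfies $r^\ast\ge2$. By the bounded-jump estimate and the minimality of $r^\ast$, $n\le n(r^\ast)\le(1+b^{-4})(n(r^\ast-1)+1)\le(1+b^{-4})n$, while $\lfloor r^\ast b^2/2\rfloor\le n(r^\ast-1)<n$ forces $r^\ast<2n/b^2$ by (b). Fixing an integer $m=m(b)$ with $2^m\ge8/b^2$ we then have $2r^\ast\le2^m n$, so using $\sum_{k\ge 2r^\ast}a_k^2=\sum_{j\ge r^\ast}(a_{2j}^2+a_{2j+1}^2)\le2S_{r^\ast}$ together with (c),
\[
S_{r^\ast}\ge\tfrac12\Sigma_{2r^\ast}\ge\tfrac12\Sigma_{2^m n}\ge\tfrac12q^m\Sigma_n.
\]
Since $e_m(H_\gamma,\INT)\le e_n(H_\gamma,\INT)$ whenever $m\ge n$, Lemma~\ref{lem:main} now gives
\[
e_n(H_\gamma,\INT)^2\ge e_{n(r^\ast)}(H_\gamma,\INT)^2\ge\frac{S_{r^\ast}}{2(n(r^\ast)+1)}\ge\frac{q^m}{4\bigl((1+b^{-4})n+1\bigr)}\Sigma_n\succcurlyeq\frac1n\sum_{k\ge n}a_k^2,
\]
and combining this with $g_n(H_\gamma,L_2)^2\ge e_n(H_\gamma,\INT)^2$ from \eqref{eq:bound} and with $a_n(H_\gamma,L_2)=a_n$ yields the assertion. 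The two ingredients that are genuinely new rather than a rerun of the earlier arguments are the bounded-jump estimate in (a) — where one has to be a little careful with the floor functions and with the exceptional small indices $r\in\{0,1\}$ — and the geometric tail bound in (c); I expect (a) to be the main obstacle, since everything downstream is routine bookkeeping.
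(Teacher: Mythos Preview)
Your proof is correct and follows essentially the same route as the paper, which derives the theorem from Lemma~\ref{lem:main} by showing that the doubling hypothesis forces the sparse index sequence $n(r)$ to hit every scale up to a constant factor. The only technical variation is in how the bounded-jump property is obtained: you control the \emph{consecutive} ratio $n(r)/(n(r-1)+1)$ via $\gamma_{r-1}\le b^{-1}\gamma_r$, whereas the paper controls the \emph{dyadic} ratio $n(2r)/n(r)$ via $\sum_{j\ge 2r}\gamma_j^4\ge b^4\sum_{j\ge r}\gamma_j^4$; likewise, you pass from $S_{r^\ast}$ back to $\Sigma_n$ using the tail inequality $\Sigma_{2n}\ge q\,\Sigma_n$, while the paper uses the pointwise bound $\gamma_{2^t j}\ge b^t\gamma_j$ inside the sum. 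These are interchangeable devices and the rest of the bookkeeping is equivalent.
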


This theorem is an immediate consequence of the following lemma.

\begin{lem}\label{prop}
Let $\gamma \in \ell_2(\Z)$ be non-negative, non-increasing on $\N_0$ and let $\gamma_{-k}\ge \gamma_k$ for all $k\in\N$.
If there is a constant $b>0$ such that $\gamma_{2k} \ge b \gamma_k$ for all $k\in \N$,
then we have
\[
e_n(H_{\gamma},{\rm INT})^2
\,\succcurlyeq\,
\frac{1}{n} \sum_{j\ge n} \gamma_j^2\,.
\]
\end{lem}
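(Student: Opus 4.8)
The plan is to apply Lemma~\ref{lem:main} not just once but along a whole sequence of starting indices $r$, and then use the doubling condition $\gamma_{2k}\ge b\gamma_k$ to fill in the values of $n$ that are not directly hit by the sequence $n(r)$. First I would recall from Lemma~\ref{lem:main} that for every $r\in\N_0$, with
\[
n(r) \,=\, \left\lfloor \frac{\left( \sum_{j\ge r} \gamma_j^2 \right)^2}{2 \sum_{j\ge r} \gamma_j^4}  \right\rfloor,
\]
one has $e_{n(r)}(H_\gamma,\INT)^2 \ge \frac{1}{2(n(r)+1)} \sum_{j\ge r}\gamma_j^2$. The key auxiliary observation is a lower and upper bound on $n(r)$ in terms of $r$ and the tail sums. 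Since $\gamma$ is non-increasing on $\N_0$, we have the crude bound $\sum_{j\ge r}\gamma_j^4 \le \gamma_r^2 \sum_{j\ge r}\gamma_j^2$, which gives $n(r) \ge \lfloor \tfrac{1}{2}\sum_{j\ge r}\gamma_j^2/\gamma_r^2 \rfloor$. In the other direction, keeping only the $j=r$ term in both sums gives $n(r) \le \tfrac12 \sum_{j\ge r}\gamma_j^2/\gamma_r^2$. So $n(r)$ is comparable to $\sum_{j\ge r}\gamma_j^2/\gamma_r^2$ up to the floor.

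Next I would show that the doubling condition forces $n(r)$ to grow in a controlled, "dense" way, so that the indices $\{n(r): r\in\N_0\}$ are not too sparse; concretely, I want to bound the ratio $n(r+1)/n(r)$ (or $n(2r)/n(r)$) by a constant. The point is that $\gamma_{2k}\ge b\gamma_k$ implies $\sum_{j\ge 2r}\gamma_j^2 \ge \sum_{j\ge r}\gamma_{2j}^2/1 \succcurlyeq b^2 \sum_{j\ge r}\gamma_j^2$ after reindexing (more carefully: $\sum_{j\ge r}\gamma_j^2 \le \sum_{j\ge r}(\gamma_{2j}^2+\gamma_{2j+1}^2)/b^2 \le 2b^{-2}\sum_{k\ge 2r}\gamma_k^2$, using monotonicity $\gamma_{2j+1}\le\gamma_{2j}$), so the tail sums only decay by a bounded factor when $r$ doubles; combined with $\gamma_r^2 \le b^{-2}\gamma_{\lceil r/2\rceil}^2$ this yields $n(2r) \asymp n(r)$ with constants depending only on $b$. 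Hence consecutive values of the increasing sequence $\big(n(2^m)\big)_{m\ge 0}$ have bounded ratio, so for every $n$ there is $m$ with $n(2^m) \le n \le n(2^{m+1}) \le C\, n(2^m)$.

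Finally I would assemble the estimate: given $n$, pick such an $m$ and set $r=2^m$. Since $e_k(H_\gamma,\INT)$ is non-increasing in $k$ (more points can only help),
\[
e_n(H_\gamma,\INT)^2 \,\ge\, e_{n(2^{m+1})}(H_\gamma,\INT)^2 \,\ge\, \frac{1}{2(n(2^{m+1})+1)}\sum_{j\ge 2^{m+1}}\gamma_j^2 \,\succcurlyeq\, \frac1n \sum_{j\ge 2^{m+1}}\gamma_j^2,
\]
using $n(2^{m+1}) \preccurlyeq n$. To finish I need $\sum_{j\ge 2^{m+1}}\gamma_j^2 \succcurlyeq \sum_{j\ge n}\gamma_j^2$; since $n \le n(2^{m+1}) \preccurlyeq \sum_{j\ge 2^{m+1}}\gamma_j^2/\gamma_{2^{m+1}}^2$, we have $2^{m+1} \le n$ so actually $\sum_{j\ge 2^{m+1}}\gamma_j^2 \ge \sum_{j\ge n}\gamma_j^2$ directly — wait, that is the wrong direction, so instead I use that $n \preccurlyeq \sum_{j\ge 2^{m+1}}\gamma_j^2/\gamma_{2^{m+1}}^2$ together with the doubling-driven comparison $\sum_{j\ge 2^{m+1}}\gamma_j^2 \asymp \sum_{j\ge n}\gamma_j^2$ (valid because $n$ and $2^{m+1}$ differ by a bounded factor and tail sums are doubling-stable, exactly as in the previous paragraph). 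This gives the claim with an implied constant depending only on $b$. The main obstacle I anticipate is making the "bounded ratio of consecutive $n(2^m)$" step fully rigorous — one must be careful that the floor function in the definition of $n(r)$ does not destroy the comparison when $n(r)$ is small, but for small $n$ the bound $e_n(H_\gamma,\INT)^2 \ge a_n(H_\gamma,L_2)^2 = \gamma_{\lceil n/2\rceil}^2 \succcurlyeq \frac1n\sum_{j\ge n}\gamma_j^2$ (the last step again by doubling, as in the second case of the proof of Theorem~\ref{thm:upperboundisoptimal}) handles those $n$ separately, so one only needs the chaining argument for large $n$.
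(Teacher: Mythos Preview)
Your overall strategy is the same as the paper's: use Lemma~\ref{lem:main} at many values of $r$, show that $n(r)\to\infty$ while $n(2r)\le C\,n(r)$, so that every large $n$ lies in some window $[n(r),Cn(r)]$, and then transfer the tail sum $\sum_{j\ge r}\gamma_j^2$ to $\sum_{j\ge n}\gamma_j^2$ via the doubling condition. The finishing step (comparing $2^{m+1}$ with $n$ through $n(2^m)\ge b^2 2^{m-1}-1$ and then using doubling-stability of the $\ell_2$-tails) is correct.

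There is, however, a genuine error in your derivation of $n(2r)\asymp n(r)$. The claimed upper bound $n(r)\le \tfrac12\sum_{j\ge r}\gamma_j^2/\gamma_r^2$ is false: ``keeping only the $j=r$ term'' in the denominator $\sum_{j\ge r}\gamma_j^4$ makes the fraction \emph{larger}, not smaller, so it does not give an upper bound. A concrete counterexample is $\gamma_j=j^{-3/4}$ (which satisfies the doubling hypothesis with $b=2^{-3/4}$): then $\sum_{j\ge r}\gamma_j^2\sim 2r^{-1/2}$ and $\sum_{j\ge r}\gamma_j^4\sim \tfrac12 r^{-2}$, so $n(r)\sim 4r$, while $\tfrac12\sum_{j\ge r}\gamma_j^2/\gamma_r^2\sim r$. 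Since your argument for $n(2r)\asymp n(r)$ goes through the (incorrect) two-sided comparison $n(r)\asymp \sum_{j\ge r}\gamma_j^2/\gamma_r^2$, that step is not justified.

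The paper fixes exactly this point by comparing $n(2r)$ and $n(r)$ \emph{directly} in the defining quotient: the numerator $(\sum_{j\ge 2r}\gamma_j^2)^2$ can only decrease, while for the denominator one uses the $\ell_4$-version of doubling,
\[
\sum_{j\ge 2r}\gamma_j^4 \,\ge\, \sum_{j\ge r}\gamma_{2j}^4 \,\ge\, b^4\sum_{j\ge r}\gamma_j^4,
\]
yielding $n(2r)\le (2/b^4)\,n(r)$ for $r$ large enough that the floor is harmless. Once you replace your two-sided estimate on $n(r)$ by this direct comparison, the remainder of your argument goes through exactly as you wrote it.
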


\begin{proof}
We use Lemma~\ref{lem:main} and observe that
\[
n(r) \,\ge\, \frac{\left( \sum_{j\ge r} \gamma_j^2 \right)^2}{2 \sum_{j\ge r} \gamma_j^4} - 1
\,\ge\, \frac{\sum_{j\ge r} \gamma_j^2 }{2 \gamma_r^2} - 1
\,\ge\, \frac{r \gamma_{2r}^2 }{2 \gamma_r^2} - 1
\,\ge\, \frac{b^2 r}{2} - 1.
\]
In particular, $n(r) \to \infty$ and $n(r) \ge 1$ for $r\ge r_0$.
On the other hand, we have
\[
 \sum_{j\ge 2r} \gamma_j^4 
 \,\ge\, \sum_{j\ge r} \gamma_{2 j}^4 
  \,\ge\, b^4 \sum_{j\ge r} \gamma_j^4
\]
and thus for all $r \ge r_0$ that
\[
 n(2r) \,\le\, \frac{\left( \sum_{j\ge 2r} \gamma_j^2 \right)^2}{2 \sum_{j\ge 2r} \gamma_j^4}
 \,\le\, \frac{\left( \sum_{j\ge r} \gamma_j^2 \right)^2}{2 b^4 \sum_{j\ge r} \gamma_j^4}
 \,\le\, \frac{2}{b^4}\, n(r).
\]
Hence, there is a constant $C\in\N$
such that for all $n\in \N$ we find some $r\in\N$
with $n \le n(r) \le Cn$.
We get 
\begin{multline*}
e_{n}(H_{\gamma},{\rm INT})^2
\,\ge\,
e_{n(r)}(H_{\gamma},{\rm INT})^2
\,\ge\,
\frac{1}{2(n(r)+1)} \sum_{j\ge r} \gamma_j^2
\,\ge\,
\frac{1}{4Cn} \sum_{j\ge 4Cn/b^2} \gamma_j^2.
\end{multline*}
Now, choosing $t\in\N$ with $2^t\ge 4C/b^2$,
we continue
\[
 \sum_{j\ge 4Cn/b^2} \gamma_j^2 
 \,\ge\, \sum_{j\ge n} \gamma_{2^t j}^2 
  \,\ge\, b^{2t} \sum_{j\ge n} \gamma_j^2 
\]
and obtain the statement.
\end{proof}

%\begin{proof}[Proof of Theorem~\ref{thm:upperboundisoptimalplus}]
%Again, we choose $H=H_\gamma$ 
%with $\gamma_k := a_{2k}$ for $k\ge 0$ and $\gamma_{-k} := a_{2k-1}$ for $k>0$
%and apply Lemma~\ref{prop}.
%Note that $\gamma_{2k} \ge b \gamma_k$ for all $k\in \N$.
%\end{proof}

\section{Finite Trace - Upper Bounds}

We now complement the lower bound of Lemma \ref{prop} with an appropriate upper bound. 
Let us recall that the lower bounds of Lemma \ref{prop} were based on
a new technique from \cite{HKNV} and \cite{V}.
Compared to that, 
the upper bounds of Propostion~\ref{prop:error_dirichlet_approx} 
and Theorem~\ref{thm:border-case}
are are based on a classical approximation scheme using the Dirichlet kernel,
see, e.g., \cite[Theorem~3.3]{PPST18}.

\medskip

%
%We decided to use the approximation of $f$ by piecewice constant functions $S_n(f)$ to make the presentation essentially self-contained.
%On the other hand, the limited smoothness of $S_n(f)$ limits the use of this method only to function spaces with small smoothness (i.e., to the case we are interested in).
Here, we approximate $f$ by
\[
S_n(f)\,:=\,\frac{1}{2n+1}\sum_{j=0}^{2n} f\left(x^n_j\right)D_n(\,\cdot\,-x^n_j),
\]
where $x^n_j=\frac{j}{2n+1}$ and $D_n(x)=\sum_{|l|\le n}e_l(x)$ is the Dirichlet kernel of degree~$n$.
The integral of $f$ is approximated by the midpoint rule
\[
Q_n(f)\,:=\,\frac{1}{2n+1}\sum_{j=0}^{2n} f\left(x^n_j\right).
\]
Note that $Q_n(f)$ is the integral of $S_n(f)$.

\begin{lem}\label{lem:error_dirichlet_approx}
 Let $f=\sum_{m\in\Z}\alpha_m e_m$ be a 1-periodic function pointwise represented by its Fourier series. 
 Then, for every $n\in\N$,
 \begin{equation*} 
  \| f- S_n(f) \|_2^2 \,=\, 
  \sum_{|j|>n} \alpha_j^2+\sum_{|k|\le n}\Biggl|\sum_{\theta\in\Z\setminus\{0\}}\alpha_{k+\theta(2n+1)}\Biggr|^2.
 \end{equation*}
\end{lem}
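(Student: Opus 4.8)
The plan is to identify $S_n(f)$ explicitly as the trigonometric polynomial of degree $n$ whose Fourier coefficients are the \emph{aliased} coefficients of $f$, and then to read off the $L_2$-error from Parseval's identity.

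First I would expand the Dirichlet kernel inside the definition of $S_n$, writing $D_n(x-x^n_j)=\sum_{|l|\le n}e_l(x)e_{-l}(x^n_j)$, to obtain
\[
S_n(f)(x) \,=\, \sum_{|l|\le n}\hat c_l\, e_l(x), \qquad \hat c_l \,:=\, \frac{1}{2n+1}\sum_{j=0}^{2n} f(x^n_j)\, e_{-l}(x^n_j),
\]
so that $S_n(f)$ is automatically a trigonometric polynomial of degree $n$ and $\hat c_l=\langle S_n(f),e_l\rangle_2$. Next I would insert the (absolutely convergent) Fourier series $f(x^n_j)=\sum_{m\in\Z}\alpha_m e_m(x^n_j)$, interchange the two summations, and use the discrete orthogonality relation
\[
\frac{1}{2n+1}\sum_{j=0}^{2n} e^{2\pi i (m-l) j/(2n+1)} \,=\, \begin{cases} 1 & \text{if } m\equiv l \pmod{2n+1},\\ 0 & \text{otherwise,}\end{cases}
\]
which is just the evaluation of a finite geometric series (its ratio is a $(2n+1)$-th root of unity). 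This yields the aliasing formula $\hat c_l=\sum_{\theta\in\Z}\alpha_{l+\theta(2n+1)}$ for every $|l|\le n$, hence $\alpha_l-\hat c_l=-\sum_{\theta\in\Z\setminus\{0\}}\alpha_{l+\theta(2n+1)}$.

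Finally I would write $f-S_n(f)=\sum_{|j|>n}\alpha_j e_j+\sum_{|l|\le n}(\alpha_l-\hat c_l)\,e_l$. Since $(e_j)_{j\in\Z}$ is an orthonormal basis of $L_2([0,1])$ and the two groups of frequencies appearing are disjoint, Parseval's identity gives
\[
\|f-S_n(f)\|_2^2 \,=\, \sum_{|j|>n}\alpha_j^2 \,+\, \sum_{|l|\le n}\Bigl|\sum_{\theta\in\Z\setminus\{0\}}\alpha_{l+\theta(2n+1)}\Bigr|^2,
\]
which is exactly the asserted identity.

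The only genuinely delicate point is the justification of the interchange of the sums over $j$ and $m$, and of the manipulations with the tail series $\sum_{\theta}\alpha_{l+\theta(2n+1)}$; this is where the hypothesis that $f$ is \emph{pointwise} represented by its Fourier series is used, since in the relevant situation $\gamma\in\ell_2(\Z)$ forces $\alpha\in\ell_1(\Z)$ by Cauchy--Schwarz, so all series converge absolutely and Fubini applies. I would dispatch this remark once at the outset; everything else is a routine finite computation.
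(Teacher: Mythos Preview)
Your proposal is correct and follows essentially the same route as the paper: compute the Fourier coefficients of $S_n(f)$ via the discrete orthogonality relation to obtain the aliasing formula $\hat c_l=\sum_{\theta\in\Z}\alpha_{l+\theta(2n+1)}$, then split $f-S_n(f)$ into its high-frequency and low-frequency parts and apply Parseval. The paper phrases the final decomposition via the partial sum operator $T_n$, writing $f-S_n(f)=(f-T_n(f))+S_n(T_n(f)-f)$, but this is exactly your splitting into $\sum_{|j|>n}\alpha_j e_j$ and $\sum_{|l|\le n}(\alpha_l-\hat c_l)e_l$, so the difference is purely cosmetic.
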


\begin{proof}
We calculate the $k$-th Fourier coefficient of $S_n(f)$ by
\begin{align*}
\int_{0}^1 S_n(f)(x)\,\overline{e_k(x)}\,{\rm d}x
&=\frac{1}{2n+1}\sum_{j=0}^{2n} f\left(x^n_j\right)\sum_{l=-n}^n e_{-l}(x^n_j) \int_0^1 e_{l-k}(x)\,{\rm d}x.
%=\frac{1}{2n+1}\sum_{j=0}^{2n} f\left(x^n_j\right) e_{-k}(x^n_j).
\end{align*}
The last expression vanishes if $|k|>n$ and for $|k|\le n$, using the Fourier expansion of 
$f$ %\left(x^n_j\right)=\sum_{m\in\Z}\alpha_{m}e_m\left(x^n_j\right)$
at $x^n_j$,
it is equal to
\[
\sum_{m\in\Z}\alpha_m \cdot\frac{1}{2n+1}\sum_{j=0}^{2n} e_m\left(x^n_j\right) e_{-k}(x^n_j)=\sum_{\theta\in\Z}\alpha_{k+\theta (2n+1)}.
\]
If we make use of the partial sum operator $T_n(f)=\sum_{|k|\le n}\alpha_ke_k$, the calculation above shows that $S_n(T_n(f))=T_n(f)$ and we can compute 
%for $f\in H_\gamma$ with $\|f\|_\gamma\le 1$
\begin{align*}
\|f-S_n(f)\|_2^2&=\|f-T_n(f)+S_n(T_n(f)-f)\|_2^2\\&=\|f-T_n(f)\|_2^2+\|S_n(T_n(f)-f)\|_2^2\\
&= \sum_{|j|>n} \alpha_j^2+\sum_{|k|\le n}\Biggl|\sum_{\theta\in\Z\setminus\{0\}}\alpha_{k+\theta(2n+1)}\Biggr|^2
\end{align*}
to obtain the desired identity.
\end{proof}

From this, one obtains the following general upper bound.

\begin{prop}\label{prop:error_dirichlet_approx}
 Let $\gamma \in \ell_2(\Z)$ be symmetric and non-increasing on $\N_0$.
 Then, for all $n\in\N$, we have
\[
	e(Q_n,H_\gamma,\INT) \,\le\, e(S_n,H_\gamma,L_2) \,\le\, 
	2\,\max\left\{\gamma_{n+1}, \bigg(\frac1n \sum_{k>n}\gamma^2_k\bigg)^{1/2} \right\}.
\]
\end{prop}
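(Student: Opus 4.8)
The plan is to use the explicit error formula from Lemma~\ref{lem:error_dirichlet_approx}. Since $Q_n(f)$ is the integral of $S_n(f)$, we have $|\INT(f)-Q_n(f)| = |\INT(f-S_n(f))| \le \|f-S_n(f)\|_2$ (by Cauchy--Schwarz against the constant $1$), so the first inequality $e(Q_n,H_\gamma,\INT)\le e(S_n,H_\gamma,L_2)$ is immediate and it suffices to bound the worst-case $L_2$-error of $S_n$. For $f=\sum_m\alpha_m e_m$ with $\|f\|_{H_\gamma}\le 1$, i.e.\ $\sum_j \alpha_j^2/\gamma_j^2\le 1$, Lemma~\ref{lem:error_dirichlet_approx} gives
\[
\|f-S_n(f)\|_2^2 \,=\, \sum_{|j|>n}\alpha_j^2 \;+\; \sum_{|k|\le n}\Bigl|\sum_{\theta\neq 0}\alpha_{k+\theta(2n+1)}\Bigr|^2.
\]

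First I would bound the tail term. By monotonicity of $\gamma$ on $\N_0$ and symmetry, $\gamma_j^2\le \gamma_{n+1}^2$ for $|j|>n$, hence $\sum_{|j|>n}\alpha_j^2 \le \gamma_{n+1}^2 \sum_{|j|>n}\alpha_j^2/\gamma_j^2 \le \gamma_{n+1}^2$. For the aliasing term I would apply Cauchy--Schwarz inside the inner sum: for fixed $k$ with $|k|\le n$,
\[
\Bigl|\sum_{\theta\neq 0}\alpha_{k+\theta(2n+1)}\Bigr|^2
\,\le\, \Bigl(\sum_{\theta\neq 0}\frac{\alpha_{k+\theta(2n+1)}^2}{\gamma_{k+\theta(2n+1)}^2}\Bigr)\Bigl(\sum_{\theta\neq 0}\gamma_{k+\theta(2n+1)}^2\Bigr).
\]
Summing over $|k|\le n$, the first factors sum to at most $\sum_j \alpha_j^2/\gamma_j^2\le 1$ (the indices $k+\theta(2n+1)$, ranging over $|k|\le n$ and $\theta\neq 0$, are exactly the integers with $|j|>n$, each hit once), so the aliasing term is at most $\max_{|k|\le n}\sum_{\theta\neq 0}\gamma_{k+\theta(2n+1)}^2$.

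The main obstacle is showing this last maximum is at most $\frac1n\sum_{k>n}\gamma_k^2$ (up to the constant absorbed in the final bound). Here I would exploit monotonicity: for each $|k|\le n$, the set $\{|k+\theta(2n+1)|:\theta\neq 0\}$ consists of $2n+1$-separated integers, all of absolute value $>n$; by monotonicity $\sum_{\theta\neq 0}\gamma_{k+\theta(2n+1)}^2$ is largest when $|k|$ is as large as possible, so it is bounded by the case-$k=n$ (or $k=-n$) value, and in fact by $2\sum_{m\ge 1}\gamma_{mn}^2$ or a comparable expression. Since $\gamma$ is non-increasing, a standard comparison of the sum over an arithmetic progression of step $\sim n$ with an integral/block sum gives $\sum_{\theta\ge 1}\gamma_{\theta n}^2 \le \frac1n\sum_{k\ge n}\gamma_k^2$ (grouping $\gamma_{k}^2$ for $k\in[\theta n,(\theta+1)n)$ dominates $\gamma_{(\theta+1)n}^2$, etc.). Carrying the separation $2n+1$ rather than $n$ only improves constants. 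Combining the tail bound $\gamma_{n+1}^2$ and the aliasing bound $\lesssim \frac1n\sum_{k>n}\gamma_k^2$, then using $\sqrt{x+y}\le\sqrt{x}+\sqrt{y}\le 2\max\{\sqrt x,\sqrt y\}$, yields the claimed factor $2\max\{\gamma_{n+1},(\frac1n\sum_{k>n}\gamma_k^2)^{1/2}\}$; the bookkeeping with the constant $2$ is where one must track the inequalities carefully, but it is routine.
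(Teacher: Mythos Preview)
Your approach is essentially the paper's: the first inequality via $Q_n=\INT\circ S_n$, then Lemma~\ref{lem:error_dirichlet_approx}, the tail bounded by $\gamma_{n+1}^2$, and the aliasing term handled by Cauchy--Schwarz followed by a block comparison for an arithmetic progression. Two small points deserve care, however.

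First, the claim that $\sum_{\theta\neq 0}\gamma_{k+\theta(2n+1)}^2$ is maximized at $|k|=n$ is not correct in general: as $k$ increases from $0$ to $n$, the negative-$\theta$ contributions grow while the positive-$\theta$ contributions shrink, and either effect can dominate (e.g.\ if $\gamma_{2n+1}>0$ and all larger $\gamma$'s vanish, the maximum is at $k=0$). The paper sidesteps this by bounding term-by-term for arbitrary $0\le k\le n$: $\gamma_{k+l(2n+1)}^2\le\gamma_{l(2n+1)}^2$ for $l\ge1$ and $\gamma_{k-l(2n+1)}^2\le\gamma_{l(2n+1)-n}^2$ for $l\ge1$, arriving at $\gamma_{n+1}^2+\sum_{l\ge1}\gamma_{l(2n+1)}^2+\sum_{l\ge2}\gamma_{l(2n+1)-n}^2$.

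Second, your crude bound $2\sum_{m\ge1}\gamma_{mn}^2$ introduces a $\gamma_n^2$ term, which is \emph{not} controlled by $\max\{\gamma_{n+1}^2,\frac1n\sum_{k>n}\gamma_k^2\}$ (take $\gamma_n=1$, $\gamma_k=0$ for $k>n$). This is not merely a constant issue; you must use that every aliasing index satisfies $|k+\theta(2n+1)|\ge n+1$, so the leading term is $\gamma_{n+1}^2$ rather than $\gamma_n^2$. With the actual spacing $2n+1$ and this observation, the block comparison gives $2\gamma_{n+1}^2+\frac1n\sum_{k>n}\gamma_k^2$, and the constant $2$ in the statement follows.
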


\begin{proof}
The first inequality is clear from $Q_n(f)=\INT(S_n(f))$ since
\[
 \left| \INT(f) - Q_n(f) \right|
 = \left| \INT(f - S_n(f)) \right|
 \le \Vert f- S_n(f) \Vert_2.
\]
Regarding the second inequality, Lemma~\ref{lem:error_dirichlet_approx}
yields for $f=\sum_{m\in\Z}\alpha_m e_m$ with $\Vert f\Vert_{H_\gamma}\le 1$ that
\begin{align*}
\| f- S_n&(f) \|_2^2 \,=\, 
  \sum_{|j|>n} \alpha_j^2+\sum_{|k|\le n}\biggl|\sum_{\theta\in\Z\setminus\{0\}}\alpha_{k+\theta(2n+1)}\biggr|^2\\
&\le \sum_{|j|>n} \frac{\alpha_j^2}{\gamma_j^2}\cdot \gamma_j^2+\sum_{|k|\le n} \Bigl\{\sum_{\theta\in\Z\setminus\{0\}}\frac{\alpha^2_{k+\theta(2n+1)}}{\gamma^2_{k+\theta(2n+1)}}\cdot \sum_{\varphi\in\Z\setminus\{0\}}\gamma^2_{k+\varphi(2n+1)}\Bigr\}\\
&\le \gamma_{n+1}^2+\max_{|k|\le n} \sum_{\varphi\in\Z\setminus\{0\}}\gamma^2_{k+\varphi(2n+1)}\cdot \sum_{|k|\le n}\sum_{\theta\in\Z\setminus\{0\}}\frac{\alpha^2_{k+\theta(2n+1)}}{\gamma^2_{k+\theta(2n+1)}}\\
&\le \gamma_{n+1}^2+\max_{|k|\le n} \sum_{\varphi\in\Z\setminus\{0\}}\gamma^2_{k+\varphi(2n+1)}= \gamma_{n+1}^2+\max_{0\le k\le n} \sum_{\varphi\in\Z\setminus\{0\}}\gamma^2_{k+\varphi(2n+1)}\\
&\le \gamma_{n+1}^2+\sum_{l=1}^\infty \gamma^2_{l(2n+1)}+\sum_{l=1}^\infty \gamma^2_{l(2n+1)-n}\\
%&\le \gamma_{n+1}^2+ 2 \sum_{\ell=0}^\infty\gamma^2_{n+1+\ell(2n+1)}
&\,\le\, 2\gamma_{n+1}^2+\frac{1}{n} \sum_{k>n}\gamma^2_k,
\end{align*}
and thus the stated inequality follows.
\end{proof}

Recall that Theorem~\ref{thm:upperboundisoptimal}
gives a lower bound for the recovery problem
that matches the upper bound of Proposition~\ref{prop:error_dirichlet_approx} up to a constant factor
for infinitely many values of $n\in\N$.
In this sense, the results show that there is no significant improvement
over the recovery method $S_n$ 
for any of the Sobolev spaces $H_\gamma$.
%of univariate periodic functions.
If $\gamma$ has some additional regularity, 
we can even say a little more.
Then both $S_n$ and $Q_n$ are optimal for all $n\in\N$
up to constants.

\begin{thm}\label{thm:border-case}
Let $\gamma \in \ell_2(\Z)$ be symmetric and non-increasing on $\N_0$
and assume that there is a constant $b>0$ such that $\gamma_{2n}\ge b \gamma_n$
for all $n\in\N_0$.
Then
\[
e_n(H_{\gamma},{\rm INT})^2
\,\asymp\,
g_n(H_\gamma,L_2)^2
\,\asymp\,
\frac{1}{n} \sum_{j> n} \gamma_j^2\,.
\]
In particular, if we have $\gamma_k = k^{-1/2} \log^{-\beta} k$ for some $\beta > 1/2$
and all $k\ge k_0$, then 
\[
 a_n(H_\gamma,L_2) \asymp n^{-1/2} \log^{-\beta} n
\] 
and
\[
e_n(H_{\gamma},{\rm INT})
\,\asymp\,
g_n(H_\gamma,L_2)
\,\asymp\,
n^{-1/2} \log^{-\beta+1/2} n\,.
\]
\end{thm}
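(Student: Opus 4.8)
The plan is to combine the lower bound already available from Lemma~\ref{prop} with the upper bound of Proposition~\ref{prop:error_dirichlet_approx}, using the chain of inequalities $e_n(H_\gamma,\INT)\le g_n(H_\gamma,L_2)$ (which follows from~\eqref{eq:bound} applied with $h=1$, noting that the integration representer is $\gamma_0^2 e_0$, so $\INT$ is, up to normalization, of the form $\INT_h$) together with $g_n(H_\gamma,L_2)\le e(S_n,H_\gamma,L_2)$. Concretely: from Proposition~\ref{prop:error_dirichlet_approx} we get
\[
g_n(H_\gamma,L_2)^2 \,\le\, e(S_n,H_\gamma,L_2)^2 \,\le\, 4\max\Big\{\gamma_{n+1}^2,\ \tfrac1n\sum_{k>n}\gamma_k^2\Big\},
\]
and from Lemma~\ref{prop} (whose hypotheses $\gamma$ symmetric, non-increasing on $\N_0$, and $\gamma_{2n}\ge b\gamma_n$ are exactly those assumed here) we get
\[
e_n(H_\gamma,\INT)^2 \,\succcurlyeq\, \tfrac1n\sum_{j>n}\gamma_j^2.
\]
So the only remaining point for the first $\asymp$ is that $\gamma_{n+1}^2\preccurlyeq \tfrac1n\sum_{k>n}\gamma_k^2$, i.e. that the $\max$ in the upper bound is (up to a constant) just the averaged tail. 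This is where the regularity hypothesis $\gamma_{2n}\ge b\gamma_n$ is used a second time: $\sum_{k>n}\gamma_k^2 \ge \sum_{k=n+1}^{2n}\gamma_k^2 \ge n\gamma_{2n}^2 \ge b^2 n\gamma_n^2 \ge b^2 n\gamma_{n+1}^2$, so $\gamma_{n+1}^2\le b^{-2}\cdot\tfrac1n\sum_{k>n}\gamma_k^2$. Combining, all three quantities $e_n(H_\gamma,\INT)^2$, $g_n(H_\gamma,L_2)^2$, and $\tfrac1n\sum_{j>n}\gamma_j^2$ are equivalent up to constants, which is the first displayed claim. (One should also remark that $\tfrac1n\sum_{j>n}\gamma_j^2 \asymp \tfrac1n\sum_{j\ge n}\gamma_j^2$, again immediate from monotonicity and doubling, so the slight index discrepancy between Lemma~\ref{prop} and the statement is harmless.)

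For the second part, I would first verify $a_n(H_\gamma,L_2)\asymp n^{-1/2}\log^{-\beta}n$. Since $a_n(H_\gamma,L_2)=\sigma_{n+1}$ is the non-increasing rearrangement of $\gamma$, and $\gamma$ is already non-increasing on $\N_0$ with $\gamma_{-k}\ge\gamma_k$, the rearrangement interleaves $\gamma_k$ and $\gamma_{-k}$; under the regularity assumption the two-sided and one-sided tails are comparable, so $a_n\asymp \gamma_{\lfloor n/2\rfloor}$, and with $\gamma_k=k^{-1/2}\log^{-\beta}k$ for $k\ge k_0$ this gives $a_n\asymp n^{-1/2}\log^{-\beta}n$. (The condition $\gamma_{2n}\ge b\gamma_n$ holds for this $\gamma$ since $(2n)^{-1/2}\log^{-\beta}(2n)\ge c\, n^{-1/2}\log^{-\beta}n$ for a suitable $c>0$.) Then I would compute the averaged tail: by comparison with the integral,
\[
\sum_{j>n}\gamma_j^2 \,=\, \sum_{j>n} j^{-1}\log^{-2\beta}j \,\asymp\, \int_n^\infty \frac{dt}{t\log^{2\beta}t} \,\asymp\, \log^{-2\beta+1}n,
\]
valid because $2\beta>1$ makes the integral convergent; hence $\tfrac1n\sum_{j>n}\gamma_j^2\asymp n^{-1}\log^{-2\beta+1}n$, and taking square roots yields $e_n(H_\gamma,\INT)\asymp g_n(H_\gamma,L_2)\asymp n^{-1/2}\log^{-\beta+1/2}n$ via the first part.

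I do not expect a serious obstacle here: the whole theorem is essentially a packaging of Lemma~\ref{prop} and Proposition~\ref{prop:error_dirichlet_approx}, and the only genuine step is the observation that the doubling condition forces $\gamma_{n+1}^2\preccurlyeq\tfrac1n\sum_{k>n}\gamma_k^2$ so that the two-sided bounds meet. The mildly fiddly parts are purely bookkeeping: making sure the symmetric-sequence normalization is consistent between the $\INT$ functional here (where $h=1$) and the $\INT_h$ framework, checking that $a_n(H_\gamma,L_2)$ really is the rearrangement and behaves like $\gamma_{\lfloor n/2\rfloor}$, and confirming the elementary integral asymptotics for the specific choice of $\gamma$ — none of which requires more than monotonicity, the doubling hypothesis, and a comparison of a sum with an integral.
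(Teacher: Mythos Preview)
Your approach is the same as the paper's: lower bound via Lemma~\ref{prop}, upper bound via Proposition~\ref{prop:error_dirichlet_approx}, and the doubling hypothesis to collapse the $\max$ into the averaged tail. One bookkeeping slip to fix: the algorithm $S_m$ of Proposition~\ref{prop:error_dirichlet_approx} uses $2m+1$ function values, not $m$, so the correct inequality is $g_n(H_\gamma,L_2)\le e(S_m,H_\gamma,L_2)$ with $m=\lfloor(n-1)/2\rfloor$ (this is exactly how the paper writes it); the doubling condition then absorbs the index shift since $\tfrac{1}{m}\sum_{k>m}\gamma_k^2\asymp\tfrac{1}{n}\sum_{k>n}\gamma_k^2$ and $\gamma_{m+1}^2\preccurlyeq\tfrac{1}{n}\sum_{k>n}\gamma_k^2$, so nothing in your argument is derailed.
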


\medskip

\begin{proof}
%Theorem~\ref{thm:border-case} is an 
The lower bound is already stated in Lemma~\ref{prop}.
On the other hand, Proposition~\ref{prop:error_dirichlet_approx} yields the upper bound
\[
	e_n(H_{\gamma},{\rm INT})^2 \,\le\, g_n(H_\gamma,L_2)^2 \,\le\, 
	4\,\max\left\{\gamma_{m+1}^2, \frac1m \sum_{k>m}\gamma^2_k \right\}
\]
with $m=\lfloor \frac{n-1}{2} \rfloor$.
Because of the regularity assumption for $\gamma$,
both $\gamma_{m+1}^2$ and  $\frac1m \sum_{k>m}\gamma^2_k $
are dominated by $\frac{1}{n} \sum_{j> n} \gamma_j^2$.
\end{proof}

\begin{rem}
%This means
By Theorem~\ref{thm:border-case}
we have a gap of order $\sqrt{\log(n)}$ between sampling and approximation numbers
if $\gamma_n \asymp n^{-1/2} \log^{-\beta} n$ with $\beta>1/2$.
%(and symmetric and non-increasing)
%zB $\gamma_k = (1+|k|)^{-1/2} (\log(1+|k|))^{-\beta}$
If we consider $\beta=1/2$ and add a double logarithm of order bigger than $1/2$, 
we obtain a gap of order $\sqrt{\log(n) \cdot \log(\log(n))}$,
and so forth. This is in a sharp contrast to the situation where 
$\gamma_n\asymp n^{-s}$
%$\gamma_k=|k+1|^{-s}$ 
with $s>1/2$.
In that case, it is known (cf.\ \cite[Theorems 2.1.2 and 2.2.1]{Kud98}) that
\[
e_n(H_{\gamma},{\rm INT})\,\asymp\,
g_n(H_\gamma,L_2)\,\asymp\,
a_n(H_\gamma,L_2)\,\asymp\,n^{-s} .
\]
\end{rem}
\smallskip

\begin{rem}
In the case of small smoothness,
%e.g., under the assumption that there is some $k_0\in\N$ such that $(k \gamma_k)_{k\ge k_0}$ is increasing
where we observe the logarithmic gap
between the sampling and approximation numbers,
an upper bound like in Proposition~\ref{prop:error_dirichlet_approx} 
can also be proven for a piecewise constant approximation.
This was done in an earlier version of this manuscript;
see also \cite[Section 2 of Chapter 12]{DL1993}.
Following the advice of a referee,
we replaced this approach 
by the more general approach of Dirichlet approximation, 
which works for any sequence $\gamma$.
\end{rem}

\section{Infinite Trace}
\label{sec:infinite}

In this section, we consider $\sigma\not\in\ell_2(\N)$ and want to show
that there exists a reproducing kernel Hilbert space $H$
whose singular values in $L_2$ are given by $\sigma$ 
and whose sampling numbers $g_n(H,L_2)$ show an arbitrarily bad behavior.
We cannot use the spaces $H_\gamma$ from the previous sections in this case
since those are not reproducing kernel Hilbert spaces any more.
We need different examples.
Here, we consider (real) sequence spaces $H\subset \ell_2(\N)$, 
which are reproducing kernel Hilbert spaces on the domain $D=\N$.
An integration problem ${\rm INT}_h$ for $h\in\ell_2$ therefore takes the form
\[
 {\rm INT}_h(f)=\sum_{j=1}^\infty h_j f_j.
\]
In \cite{HNV08}, Hinrichs, Novak and Vyb\'iral proved the following.

\begin{lem}[\cite{HNV08}]\label{lem:HNV}
 Let $\sigma_1\ge \sigma_2\ge \hdots \ge 0$ such that $\sum_{j=1}^\infty \sigma_j^2 = \infty$
 and
 let $n_0\in\N$ and $\varepsilon>0$.
 Then there is some $m\in \N$ 
 and a Hilbert space $H \subset \R^m$
 as well as some $h\in \R^m$ with $\Vert h \Vert_2 =1$ 
 such that $a_n(H,\ell_2^m) = \sigma_{n+1}$ for all $n < m$ and
\[
 e_{n_0}(H,{\rm INT}_h) \,\ge\, (1-\varepsilon) \sigma_1 .
\]
\end{lem}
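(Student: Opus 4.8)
The plan is to reduce to a single finite‑dimensional example and then let its dimension grow. On the domain $D=\{1,\dots,m\}$ an RKHS $H\subset\R^m$ is nothing but an $m\times m$ positive semi‑definite matrix $K$ (the reproducing kernel); the squared singular values of $\mathrm{id}\colon H\to\ell_2^m$ are the eigenvalues of $K$, and $\INT_h(f)=h^\top f$ has representer $\eta=Kh\in H$. So the statement asks for a PSD $K$ with $\operatorname{spec}(K)=(\sigma_1^2,\dots,\sigma_m^2)$ and an $h$ with $\|h\|_2=1$ for which the minimal integration error with $n_0$ nodes, namely
\[
 e_{n_0}(H,\INT_h)^2=\inf_{|S|=n_0}\bigl(\|\eta\|_H^2-\|\Pi_{V_S}\eta\|_H^2\bigr),\qquad V_S=\operatorname{span}\{K(\cdot,x):x\in S\},
\]
is at least $(1-\varepsilon)^2\sigma_1^2$; here $\|\eta\|_H^2\le\sigma_1^2$ always, so one needs $\|\eta\|_H$ close to $\sigma_1$ and $\|\Pi_{V_S}\eta\|_H$ uniformly small.

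I would obtain this with the same mechanism as in the proof of Theorem~\ref{lem:sigma_new}: take $K=M\circ M$ to be the Schur square of a PSD matrix $M$ with constant diagonal $M_{jj}=t$, so that $\operatorname{tr}K=mt^2=\sum_{i\le m}\sigma_i^2$ fixes $t$, arrange $\mathbf 1$ to be the top eigenvector of $K$ (eigenvalue $\sigma_1^2$), and set $h=u_1:=\mathbf 1/\sqrt m$. Then $\|h\|_2=1$, $\eta=\sigma_1^2u_1$, $\|\eta\|_H=\sigma_1$, and $\eta_S=\tfrac{\sigma_1^2}{\sqrt m}\mathbf 1_S$ on any node set $S$ with $|S|=n_0$. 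By \cite[Theorem~1]{V}, $K[S,S]=M[S,S]\circ M[S,S]\succeq\tfrac{t^2}{n_0}\mathbf 1_S\mathbf 1_S^\top$, i.e.\ $K[S,S]\succeq\alpha\,\eta_S\eta_S^\top$ with $\alpha=mt^2/(n_0\sigma_1^4)$, so by \cite[Proposition~1]{HKNV}
\[
 e_{n_0}(H,\INT_h)^2\ \ge\ \sigma_1^2-\alpha^{-1}\ =\ \sigma_1^2\Bigl(1-\frac{n_0\sigma_1^2}{\sum_{i\le m}\sigma_i^2}\Bigr).
\]
Because $\sigma\notin\ell_2$ the sum $\sum_{i\le m}\sigma_i^2\to\infty$, so choosing $m$ with $\sum_{i\le m}\sigma_i^2\ge n_0\sigma_1^2/\bigl(1-(1-\varepsilon)^2\bigr)$ gives $e_{n_0}(H,\INT_h)\ge(1-\varepsilon)\sigma_1$, while $a_n(H,\ell_2^m)=\sigma_{n+1}$ for $n<m$ by construction.

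What remains — and this is the technical heart of the lemma, carried out in \cite{HNV08} by a somewhat different (pre‑\cite{V}) route — is the \emph{matrix construction}: for every large $m$ one must exhibit a PSD $M$ with constant diagonal whose Schur square $M\circ M$ has the prescribed eigenvalues $(\sigma_1^2,\dots,\sigma_m^2)$ and has $\mathbf 1$ as a top eigenvector. I expect this to be the main obstacle. Being a Schur square of a PSD matrix is a genuine restriction on $K$ (for instance $K$ must be a Gram matrix of the symmetric squares of some unit vectors), so one has to pick $M$ from a sufficiently flexible parametric family and exploit the abundance of free parameters when $m$ is large to meet the spectral constraint together with the alignment $K\mathbf 1=\sigma_1^2\mathbf 1$; some slack is available since one only needs $\eta$ proportional to $\operatorname{diag}(M)$, and $\|\operatorname{diag}(M)\|_2^2=\operatorname{tr}K=\sum_{i\le m}\sigma_i^2\to\infty$ automatically makes the correction term negligible in that case as well.

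A route that avoids \cite{V} altogether is to take $H$ diagonal, $\|f\|_H^2=\sum_i\sigma_i^{-2}\langle f,u_i\rangle^2$, with $u_1$ flat and the remaining $u_i$ chosen so that no $n_0$‑sparse vector is concentrated along $u_1$ relative to its $K$‑energy — a restricted‑isometry‑type condition — which likewise forces $\|\Pi_{V_S}\eta\|_H$ small for all $S$; making that bound uniform over all $\binom{m}{n_0}$ node sets is then the delicate point of this alternative, and for heavy‑tailed $\sigma$ it seems to require a structured rather than a random choice of the basis.
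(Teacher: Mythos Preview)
Your write-up is candid about being incomplete, and the gap you flag is real. The paper itself does not prove this lemma: it simply cites \cite{HNV08} and observes that the proof there already yields the present formulation with $h=e_1$. So there is no independent argument in the paper to compare against; the content of the lemma lives entirely in \cite{HNV08}, whose construction predates \cite{V} and does not go through Schur squares.

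Your main route has a genuine obstruction. You need a PSD matrix $M$ with constant diagonal whose Schur square $K=M\circ M$ has \emph{exactly} the prescribed eigenvalues $\sigma_1^2,\dots,\sigma_m^2$ \emph{and} has $\mathbf 1$ as top eigenvector. That is an over-determined system: being a Schur square forces $K$ to be the Gram matrix of rank-one tensors $v_i\otimes v_i$ with $\|v_i\|=1$, which constrains the spectrum well beyond the trace normalisation you use. There is no reason to expect that an arbitrary finite segment $(\sigma_1^2,\dots,\sigma_m^2)$ can be hit this way, and you give no argument that it can. The ``slack'' you mention (dropping the constant-diagonal requirement and taking $h=\operatorname{diag}(M)/\|\operatorname{diag}(M)\|$) does not help: the representer is $\eta=Kh$, not $\operatorname{diag}(M)$ itself, so unless $\operatorname{diag}(M)$ is an eigenvector of $K$ you lose the alignment that makes the Vyb\'iral bound bite.

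Your final paragraph is actually the right direction and is essentially what \cite{HNV08} does: take $K=U\operatorname{diag}(\sigma_i^2)U^\top$ with $u_1$ flat (so the spectrum is exact for free) and prove directly that no $n_0$ columns of $K$ can absorb much of the $u_1$-energy. But you stop at ``seems to require a structured rather than a random choice''. That structured choice, together with the estimate showing $\eta_S^\top K_{S,S}^{-1}\eta_S$ is uniformly small over all $|S|=n_0$, is precisely the work done in \cite{HNV08}; without it the argument is not a proof.
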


\begin{proof}
We note that in \cite[Theorem~1]{HNV08}
only $\sigma_1=1$ is considered but this is just a matter of scaling. 
Moreover, it is only written that $a_n(H,\ell_2^m) = \sigma_{n+1}$
for $n=n_0$ and that 
\[
 g_{n_0}(H,\ell_2^m) \,\ge\, (1-\varepsilon) \sigma_1.
\]
But a second look quickly shows
that the authors prove precisely the stated lemma
for $h=e_1$ and $e_1$ as in \cite{HNV08}. 
\end{proof}

From this, they concluded the following.

\begin{thm}[\cite{HNV08}]
 Let $\sigma_1\ge \sigma_2\ge \hdots \ge 0$ such that $\sum_{j=1}^\infty \sigma_j^2 = \infty$
 and $\tau_0 \ge \tau_1 \ge \hdots \ge 0$ such that $\lim_{n\to\infty} \tau_n =0$.
 Then there is 
 a Hilbert space $H\subset \ell_2(\N)$
 such that $a_n(H,\ell_2) = \sigma_{n+1}$ for all $n\in \N_0$ and
\[
 g_{n}(H,L_2) \,\ge\, \tau_n
\]
for infinitely many values of $n\in\N$.
\end{thm}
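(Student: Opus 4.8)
The plan is to realise $H$ as an orthogonal direct sum $H=\bigoplus_{k\ge 1}H_k$ of finite-dimensional blocks delivered by Lemma~\ref{lem:HNV}, sitting on pairwise disjoint blocks of coordinates, chosen so that the singular values of the $H_k$ concatenate to exactly $\sigma$ while each block forces $g_n$ to be large at one point of an increasing sequence of indices. We may assume $\sigma_n\to 0$: otherwise $a_n(H,\ell_2)=\sigma_{n+1}\ge c>0$ for all $n$, and since $\tau_n\to 0$ the bound $g_n(H,L_2)\ge a_n(H,L_2)\ge\tau_n$ holds trivially for all large $n$. Note also that, $\sigma$ being non-increasing with $\sum_j\sigma_j^2=\infty$, every $\sigma_j$ is strictly positive.

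First I would set $N_0:=0$ and construct, inductively for $k=1,2,\dots$, an integer $n_k$ and a block, as follows. Given $N_{k-1}$, the tail $(\sigma_{N_{k-1}+j})_{j\ge 1}$ is non-increasing with divergent sum of squares. Choose $n_k\in\N$ with $n_k>n_{k-1}$ and, using $\tau_n\to 0$ and $\sigma_{N_{k-1}+1}>0$,
\[
\tau_{n_k}\,\le\,\tfrac12\,\sigma_{N_{k-1}+1}.
\]
Then apply Lemma~\ref{lem:HNV} to this tail with this $n_k$ and with $\varepsilon=\tfrac12$, obtaining a dimension $m_k\ge 1$, a Hilbert space $H_k\subset\R^{m_k}$ with $a_n(H_k,\ell_2^{m_k})=\sigma_{N_{k-1}+1+n}$ for all $n<m_k$, and a vector $h_k\in\R^{m_k}$ with $\|h_k\|_2=1$ and $e_{n_k}(H_k,\INT_{h_k})\ge\tfrac12\,\sigma_{N_{k-1}+1}$. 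Put $N_k:=N_{k-1}+m_k$ and $I_k:=\{N_{k-1}+1,\dots,N_k\}$, and view $H_k$ and $h_k$ as living on the coordinate block $I_k\subset\N$; since $m_k\ge 1$, the sets $I_k$ tile $\N$.

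Now let $H:=\bigoplus_{k\ge 1}H_k$ with $\|f\|_H^2=\sum_k\|f^{(k)}\|_{H_k}^2$. From $\|f^{(k)}\|_{\ell_2}\le\sigma_{N_{k-1}+1}\|f^{(k)}\|_{H_k}\le\sigma_1\|f^{(k)}\|_{H_k}$ one gets $H\subset\ell_2(\N)$, and every coordinate evaluation $f\mapsto f_j$ is bounded on $H$ because it factors through the block $H_k$ with $j\in I_k$; hence $H$ is a separable RKHS on $D=\N$. The embedding $H\hookrightarrow\ell_2(\N)$ is the orthogonal direct sum of the embeddings $H_k\hookrightarrow\ell_2(I_k)$ over the disjoint blocks $I_k$, so it is compact and its singular-value sequence is the union, with multiplicities, of those of the $H_k$, namely $\{\sigma_j:j\in I_k,\ k\ge 1\}=\{\sigma_1,\sigma_2,\dots\}$, which is already non-increasing. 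Therefore $a_n(H,\ell_2)=\sigma_{n+1}$ for every $n\in\N_0$.

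Finally, extend each $h_k$ by zero to a unit vector of $\ell_2(\N)$. The functional $\INT_{h_k}$ depends only on the $I_k$-component of its argument, any quadrature on $H$ using $n_k$ function values restricts on the subspace $H_k$ to a quadrature on $H_k$ with at most $n_k$ nodes, and $\|f\|_H=\|f\|_{H_k}$ when $f$ is supported on $I_k$; taking infima over quadratures gives $e_{n_k}(H,\INT_{h_k})\ge e_{n_k}(H_k,\INT_{h_k})\ge\tfrac12\,\sigma_{N_{k-1}+1}$. Combining this with the general bound \eqref{eq:bound} and the choice of $n_k$,
\[
g_{n_k}(H,L_2)\,\ge\,e_{n_k}(H,\INT_{h_k})\,\ge\,\tfrac12\,\sigma_{N_{k-1}+1}\,\ge\,\tau_{n_k},
\]
and since $(n_k)_{k\ge 1}$ is strictly increasing this holds for infinitely many $n$, which proves the claim. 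The only genuinely delicate point is the interlocked choice of parameters --- the threshold $\tfrac12\sigma_{N_{k-1}+1}$ for block $k$ is pinned down only after the earlier blocks (hence $N_{k-1}$) have been fixed --- but it is always a strictly positive number depending solely on data already constructed, so a suitable $n_k>n_{k-1}$ with $\tau_{n_k}\le\tfrac12\sigma_{N_{k-1}+1}$ exists because $\tau_n\to 0$; everything else is the routine calculus of orthogonal direct sums. In particular, we never have to control the dimensions $m_k$ returned by Lemma~\ref{lem:HNV}, since the consecutive blocks $I_k$ tile $\N$ whatever the $m_k$ happen to be.
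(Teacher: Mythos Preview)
The paper does not itself prove this theorem: it is simply cited from \cite{HNV08} and stated without argument. Your proposal supplies a correct proof along the expected lines, namely by stacking the finite-dimensional blocks provided by Lemma~\ref{lem:HNV} on consecutive coordinate intervals so that their singular values concatenate to~$\sigma$, and then using \eqref{eq:bound} with the block's representer $h_k$ to force $g_{n_k}(H,\ell_2)\ge\tau_{n_k}$ along the strictly increasing sequence $(n_k)$. The verification that the direct sum has exactly the approximation numbers $\sigma_{n+1}$, that point evaluations remain bounded, and that a quadrature on $H$ restricts to one on $H_k$ with no more nodes, is all done correctly.

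One small expository wrinkle: in the reduction ``we may assume $\sigma_n\to 0$'' you invoke the inequality $g_n\ge a_n=\sigma_{n+1}\ge c$ for a space $H$ that has not yet been constructed. Either observe that your block construction does not in fact use $\sigma_n\to 0$ anywhere (the choice of $n_k$ needs only $\tau_n\to 0$ and $\sigma_{N_{k-1}+1}>0$), so the case split is unnecessary; or, in the non-vanishing case, first exhibit any $H$ with the prescribed approximation numbers (e.g.\ the diagonal space $\{f:\sum_j f_j^2/\sigma_j^2<\infty\}$) before appealing to $g_n\ge a_n$.

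For comparison, the paper's proof of the stronger Theorem~\ref{thm:infinite} (the ``almost all $n$'' version) uses a genuinely different block decomposition: the index sets $I_j$ there are \emph{infinite} and interleaved so that $\sum_{k\in I_j}\sigma_k^2=\infty$ for every $j$, and the infinite-dimensional variant Lemma~\ref{lem:HNVmod} is applied on each $I_j$. That structure is what allows the same functional $h=\sum_j 2^{-j/2}h_j$ to witness the lower bound at every scale simultaneously. Your consecutive-finite-block construction is simpler and perfectly adequate for the ``infinitely many $n$'' conclusion, but would not directly yield Theorem~\ref{thm:infinite}.
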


Thus, in the case of an infinite trace, 
there are no reasonable upper bounds for the sampling numbers
in terms of the approximation numbers.
At least there are none that hold for (almost) all values of $n$.
For example, it may happen that $a_n(H,L_2) = n^{-1/2}$
for all $n\in\N$, but $g_n(H,L_2) \ge \log^{-1/2} n$ for infinitely many
values of $n\in\N$.

\medskip

But the theorem still leaves us with some hope.
On the one hand, 
it leaves room for upper bounds on the sampling numbers that hold
for infinitely many values of $n\in\N$.
On the other hand, there might still be upper bounds 
for the simpler
problem of computing 
an integral.
In both cases, the hope is not justified.
We prove the following.

\begin{thm}\label{thm:infinite}
 Let $\sigma_1\ge \sigma_2\ge \hdots \ge 0$ such that $\sum_{j=1}^\infty \sigma_j^2 = \infty$
 and $\tau_0 \ge \tau_1 \ge \hdots \ge 0$ such that $\lim_{n\to\infty} \tau_n =0$.
 Then there is 
 a Hilbert space $H\subset \ell_2(\N)$
 such that $a_n(H,\ell_2) = \sigma_{n+1}$ for all $n\in \N_0$
 and some $h\in \ell_2$ with $\Vert h\Vert_2=1$
 such that
\[
 g_n(H,\ell_2) \,\ge\, e_n(H,{\rm INT}_h) \,\ge\,  \tau_n
\]
for almost all values of $n\in\N$.
\end{thm}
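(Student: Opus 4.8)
The natural strategy is to build $H$ as an $\ell_2$-sum of the finite-dimensional spaces supplied by Lemma~\ref{lem:HNV}, arranging the blocks along a sparse sequence of scales so that at each prescribed level $n$ one block is entirely ``responsible'' for forcing the lower bound $e_n(H,{\rm INT}_h)\ge\tau_n$. Concretely, I would choose a rapidly increasing sequence of indices $0=N_0<N_1<N_2<\dots$ (to be fixed at the end) and, on each block $[N_{i-1},N_i)$, invoke Lemma~\ref{lem:HNV} with the tail of the singular value sequence $(\sigma_j)_{j\ge N_{i-1}+1}$, with $n_0$ equal to $N_i$ (the largest index for which we want the bound to be active in block $i$), and with $\varepsilon=\varepsilon_i\to 0$. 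This yields a finite-dimensional RKHS $H^{(i)}\subset\R^{m_i}$ with $a_n(H^{(i)},\ell_2^{m_i})=\sigma_{N_{i-1}+1+n}$ for $n<m_i$, together with a representer $h^{(i)}$ of unit norm such that any quadrature using $N_i$ nodes has error at least $(1-\varepsilon_i)\sigma_{N_{i-1}+1}$ on $H^{(i)}$. Then I set $H=\bigoplus_i c_i\, H^{(i)}$ (an $\ell_2$-direct sum, with each copy rescaled by a weight $c_i$), embedded into $\ell_2(\N)$ by placing the blocks on disjoint coordinate sets, and $h=\bigoplus_i d_i\, h^{(i)}$ for suitable scalars $d_i$ with $\sum_i d_i^2=1$.

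The first thing to check is that the singular values come out right. Since the embedding of a direct sum into $\ell_2$ has singular values equal to the union (with multiplicity) of the singular values of the summands, I must choose the block weights $c_i$ and the cutoffs $N_i$ so that the rescaled spectra $c_i\cdot(\sigma_{N_{i-1}+1},\sigma_{N_{i-1}+2},\dots)$, merged and sorted, reproduce exactly $(\sigma_1,\sigma_2,\dots)$. Taking $c_i=1$ is the obvious first attempt: then block $i$ contributes exactly $\sigma_{N_{i-1}+1},\dots,\sigma_{N_i}$ provided its dimension $m_i$ is large enough, i.e.\ $m_i\ge N_i-N_{i-1}$, and provided Lemma~\ref{lem:HNV} can be run with that many nontrivial singular values — which it can, since we simply feed it a longer initial segment of the tail and keep only the first $N_i-N_{i-1}$ singular values, discarding the rest (or rather, choosing $m_i$ as whatever the lemma produces and only using the first $N_i-N_{i-1}$ of the claimed singular-value identities; the remaining ones are $\le\sigma_{N_i}$ by monotonicity, so after we guarantee the tail below $N_i$ is handled by later blocks, a small bookkeeping argument recovers the exact identity $a_n(H,\ell_2)=\sigma_{n+1}$). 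So the spectral bookkeeping, while fiddly, is routine.

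The lower bound is the heart of the matter. Fix $n\in\N$ and let $i=i(n)$ be the block index with $N_{i-1}\le n<N_i$. Any quadrature $Q_n$ for ${\rm INT}_h$ on $H$ uses $n$ nodes; restricting attention to the $i$-th block, $Q_n$ induces a quadrature $Q_n^{(i)}$ with at most $n\le N_i-1<N_i$ nodes for the functional ${\rm INT}_{h^{(i)}}$ on $H^{(i)}$. Since $h=\bigoplus d_j h^{(j)}$ and the blocks are orthogonal, testing on functions supported in the $i$-th block gives
\[
e(Q_n,H,{\rm INT}_h)\ \ge\ d_i\cdot e\bigl(Q_n^{(i)},H^{(i)},{\rm INT}_{h^{(i)}}\bigr)\ \ge\ d_i\,(1-\varepsilon_i)\,\sigma_{N_{i-1}+1},
\]
using Lemma~\ref{lem:HNV} (which bounds the error of \emph{any} $N_i$-node quadrature, hence a fortiori any $n$-node one, from below). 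Thus $e_n(H,{\rm INT}_h)\ge d_{i(n)}(1-\varepsilon_{i(n)})\sigma_{N_{i(n)-1}+1}$. It remains to arrange the free parameters so that this quantity is $\ge\tau_n$ for all large $n$. Since $\tau_n$ decreases to $0$, it suffices to make the right-hand side decrease slowly enough: choose $N_i$ so large that $\sigma_{N_{i-1}+1}$ is still positive (possible because $\sum\sigma_j^2=\infty$ forces $\sigma_j>0$ for all $j$), then pick $d_i$ so that $d_i(1-\varepsilon_i)\sigma_{N_{i-1}+1}\ge\tau_{N_i-1}$ — the largest value of $\tau$ we need to dominate on that block — while simultaneously keeping $\sum_i d_i^2=1$. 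This forces a greedy construction: inductively, having fixed $N_0,\dots,N_{i-1}$ and $d_1,\dots,d_{i-1}$, choose $d_i>0$ small enough that $\sum_{j\le i}d_j^2<1$, then choose $N_i$ large enough (using $\tau_n\to 0$) that $\tau_{N_i-1}\le d_i(1-\varepsilon_i)\sigma_{N_{i-1}+1}$; finally normalize the $d_i$'s, or rather arrange $\sum d_i^2=1$ by letting the tail absorb the remainder. One must also double-check that the bound holds for \emph{almost all} $n$, not just at the cutoffs: but since $e_n$ is non-increasing in $n$ and $\tau_n$ is non-increasing, the estimate $e_n(H,{\rm INT}_h)\ge d_{i(n)}(1-\varepsilon_{i(n)})\sigma_{N_{i(n)-1}+1}\ge\tau_{N_{i(n)}-1}\ge\tau_n$ is valid for every $n\ge N_0$.

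The main obstacle I anticipate is not any single inequality but the simultaneous satisfaction of all constraints: the weights $d_i$ must be summable in square (so $h\in\ell_2$ with unit norm) yet large enough that $d_i\sigma_{N_{i-1}+1}$ dominates $\tau_{N_i-1}$; and the block dimensions $m_i$ and cutoffs $N_i$ must interlock so that the merged spectrum is \emph{exactly} $(\sigma_{n+1})_{n\ge0}$ rather than merely comparable. Getting the \emph{exact} spectral identity — as opposed to the polynomial- or asymptotic-order statement that would suffice for the application — requires being careful that Lemma~\ref{lem:HNV} delivers the prescribed singular values on the nose (which it does, for $n<m$) and that nothing from the ``unused'' part of a block leaks into the sorted spectrum below the level handled by subsequent blocks; this is handled by choosing each $m_i$ to be exactly $N_i-N_{i-1}$ if the lemma permits, or otherwise by a truncation/padding argument. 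All of this is elementary once the order of quantifiers in the construction is pinned down, which is why I would fix $N_0$, then alternately choose $d_i$ (small) and $N_i$ (large) in that order.
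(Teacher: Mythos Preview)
Your plan has a genuine gap, and it hides exactly where you wave your hands: the ``spectral bookkeeping'' and the greedy choice of parameters do not close. First a monotonicity slip that matters: for $n\in[N_{i-1},N_i)$ one has $\tau_n\ge\tau_{N_i-1}$, not the reverse, so the inequality you actually need is $d_i(1-\varepsilon_i)\sigma_{N_{i-1}+1}\ge\tau_{N_{i-1}}$. This is a \emph{lower} bound on $d_i$ in terms of quantities already frozen at step $i-1$; you can no longer push $N_i$ out to make it small. To keep $\sum d_i^2<\infty$ you would need $\tau_{N_{i-1}}/\sigma_{N_{i-1}+1}\to 0$, but nothing in the hypotheses guarantees this: with $\sigma_j=j^{-1/2}$ and $\tau_n=n^{-1/4}$ the ratio blows up, and no choice of the $N_i$'s rescues you. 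Independently, Lemma~\ref{lem:HNV} returns a dimension $m$ you do not control (typically $m\gg n_0$), so with consecutive blocks you cannot have simultaneously (i) the block spectra tiling $(\sigma_k)$ exactly and (ii) the quadrature lower bound valid on all of $[N_{i-1},N_i)$; truncating $H^{(i)}$ shrinks its unit ball and can destroy the lower bound, while padding only helps when $m_i$ is too small, which is the unlikely direction.

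The paper's proof removes both obstructions with one device: it first partitions $\N$ into \emph{infinite} index sets $I_j=\{2j-1\}\cup\{k:k\equiv 2^j\bmod 2^{j+1}\}$, each still satisfying $\sum_{k\in I_j}\sigma_k^2=\infty$. An infinite-dimensional extension of Lemma~\ref{lem:HNV} (Lemma~\ref{lem:HNVmod}) then produces, for each $j$ and any prescribed $n_j$, a space $H_j\subset\ell_2(I_j)$ whose singular values are \emph{exactly} $(\sigma_k)_{k\in I_j}$ and with $e_{n_j}(H_j,{\rm INT}_{h_j})\ge\sigma_{2j-1}/2$. The direct sum automatically has spectrum $\sigma$---no truncation, no padding---and the ``strength'' $\sigma_{2j-1}$ of block $j$ is fixed by the partition, completely decoupled from the thresholds $n_j$. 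One can therefore fix $d_j=2^{-j/2}$ in advance and then choose each $n_{j-1}$ large enough that $\tau_{n_{j-1}}\le 2^{-j/2}\sigma_{2j-1}/2$; the greedy step now acts on a single free parameter with no feedback into the spectrum, and the chain $e_n\ge 2^{-j/2}\sigma_{2j-1}/2\ge\tau_{n_{j-1}}\ge\tau_n$ holds for every $n\in[n_{j-1},n_j)$.
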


For the proof, we first add a slight modification of Lemma~\ref{lem:HNV}.

\begin{lem}\label{lem:HNVmod}
  Let $\sigma_1\ge \sigma_2\ge \hdots \ge 0$ such that $\sum_{j=1}^\infty \sigma_j^2 = \infty$
 and let $n_0\in\N$ and $\varepsilon>0$.
 Then there is a Hilbert space $H \subset \ell_2(\N)$
 as well as some $h\in \ell_2$ with $\Vert h \Vert_2 =1$ 
 such that $a_n(H,\ell_2) = \sigma_{n+1}$ for all $n \in\N_0$ and
\[
 e_{n_0}(H,{\rm INT}_h) \,\ge\, (1-\varepsilon) \sigma_1 .
\]
\end{lem}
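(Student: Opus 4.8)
The plan is to upgrade the finite-dimensional construction of Lemma~\ref{lem:HNV} to an infinite-dimensional one by direct-summing it with a tail space that carries the remaining singular values exactly. First I would apply Lemma~\ref{lem:HNV} with the \emph{full} sequence $(\sigma_j)_{j\ge 1}$, the given $n_0$, and the given $\varepsilon$ to obtain $m\in\N$, a Hilbert space $H_0\subset\R^m$, and $h_0\in\R^m$ with $\Vert h_0\Vert_2=1$ such that $a_n(H_0,\ell_2^m)=\sigma_{n+1}$ for all $n<m$ and $e_{n_0}(H_0,\INT_{h_0})\ge(1-\varepsilon)\sigma_1$. The key point is that $H_0$ reproduces the top $m$ singular values $\sigma_1,\dots,\sigma_m$, but it misses the tail $\sigma_{m+1},\sigma_{m+2},\dots$, and moreover we need the full sequence of approximation numbers to come out right — which forces us to think about how a tail is glued on without disturbing either the integration lower bound or the ordering of singular values.

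The second step is to define $H:=H_0\oplus H_1$ where $H_1$ lives on a disjoint copy of $\N$ (so that $H\subset\ell_2(\N)$ after relabelling coordinates), and $H_1$ is the weighted $\ell_2$-space whose embedding into $\ell_2$ has singular values exactly $(\sigma_{m+1},\sigma_{m+2},\dots)$; concretely $H_1=\{x: \sum_j x_j^2/\sigma_{m+j}^2<\infty\}$, which is a reproducing kernel Hilbert space on its index set (the diagonal kernel $K_1(j,k)=\sigma_{m+j}^2\delta_{jk}$). Since the singular values of a direct sum are the merged-and-sorted union of the two singular value sequences, and $\sigma_1\ge\cdots\ge\sigma_m\ge\sigma_{m+1}\ge\cdots$, the merge is already sorted, so $a_n(H,\ell_2)=\sigma_{n+1}$ for \emph{all} $n\in\N_0$. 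For the representer I would simply take $h:=(h_0,0)\in\ell_2$, which still has $\Vert h\Vert_2=1$; then $\INT_h$ ignores the $H_1$-component entirely, so any quadrature rule for $\INT_h$ on $H$ restricts to one on $H_0$ (test functions supported on $H_0$ are unaffected by the extra coordinates), giving $e_{n_0}(H,\INT_h)\ge e_{n_0}(H_0,\INT_{h_0})\ge(1-\varepsilon)\sigma_1$.

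I expect the only genuine subtlety to be the bookkeeping that makes ``restriction to $H_0$'' rigorous: one must check that a quadrature formula $Q_{n_0}(f)=\sum_{i=1}^{n_0} w_i f(x_i)$ on the domain $\N$ (sampling arbitrary coordinates, including those belonging to the $H_1$-block) cannot do better against the unit ball of $H$ than the best $n_0$-point rule does against the unit ball of $H_0$. This follows because for $f\in H_0$ (viewed inside $H$ with zero $H_1$-part) we have $\Vert f\Vert_H=\Vert f\Vert_{H_0}$, $\INT_h(f)=\INT_{h_0}(f)$, and $f(x_i)$ equals the corresponding $H_0$-coordinate value (or $0$ if $x_i$ lies in the $H_1$-block, which only helps the adversary less), so $\sup_{\Vert f\Vert_H\le1}|\INT_h(f)-Q_{n_0}(f)|\ge\sup_{\Vert f\Vert_{H_0}\le1}|\INT_{h_0}(f)-\tilde Q_{n_0}(f)|$ for the induced rule $\tilde Q_{n_0}$ on $H_0$. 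Taking the infimum over all $Q_{n_0}$ yields the claimed bound, and this is exactly the inequality needed to feed Lemma~\ref{lem:HNVmod} into the proof of Theorem~\ref{thm:infinite}.
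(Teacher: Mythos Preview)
Your proposal is correct and follows essentially the same approach as the paper: take the finite-dimensional space from Lemma~\ref{lem:HNV}, direct-sum it with a tail space on disjoint coordinates carrying the remaining singular values $(\sigma_k)_{k>m}$, and extend $h$ by zero. The paper's argument for the lower bound is phrased more tersely (the unit ball of $H$ contains that of $H_0$, and the extra point evaluations at coordinates $k>m$ vanish on $H_0$), but your more explicit restriction-to-$\tilde Q_{n_0}$ bookkeeping is exactly the same reasoning spelled out.
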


\begin{proof}
We take the Hilbert space 
$H_1 \subset \R^m$ and $h\in \R^m \subset \ell_2$ from Lemma~\ref{lem:HNV}
and a Hilbert space $H_2$ that is contained in the orthogonal complement
of $\R^m$ in $\ell_2$ and has singular values $(\sigma_k)_{k>m}$.
We choose $H$ as the direct sum of $H_1$ and $H_2$ in $\ell_2$.
Then the approximation numbers of $H$ in $\ell_2$ are given by $\sigma$.
Moreover, the lower bound of Lemma~\ref{lem:HNV}
extends to $e_{n_0}(H,{\rm INT}_h)$, since the unit ball of $H$ is larger
than the unit ball of $H_1$ and since the additional point evaluations
$f\mapsto f_k$ for $k>m$ that are gained from replacing $\ell_2^m$
by $\ell_2$ are equal to the zero functional on $H_1$. 
\end{proof}

\begin{proof}[Proof of Theorem~\ref{thm:infinite}]
First, we partition $\N$ into index sets $I_j$, $j\in\N$,
such that $I_j$ starts with $2j-1$ and such that the square sum
of $\sigma$ over each index set $I_j$ is still infinite.
For $I_j$, in addition to the odd index $2j-1$, we take every other of the even indices which have not been used yet for $I_i$ with $i<j$. Namely,
\[
 I_j \,=\, \{2j-1\} \cup \{ k\in\N \colon k \equiv 2^j \,{\rm mod}\, 2^{j+1}\}.
\]
Then, because of monotonicity,
\[
 \sum_{k\in I_j} \sigma_k^2 \,\ge\, \sum_{l =1}^\infty \sigma_{l 2^{j+1}}^2
 \,\ge\, \frac{1}{2^{j+1}} \sum_{k=2^{j+1}}^\infty \sigma_k^2 \,=\, \infty.
\]

\medskip

Secondly, we choose natural numbers $n_0<n_1<n_2<\hdots$
such that we have for all $j\in\N$ that
\[
 \tau(n_{j-1}) \le  2^{-j/2} \frac{\sigma_{2j-1}}{2}.
\]
Then, by Lemma~\ref{lem:HNVmod}, there is an example $(H_j,\ell_2(I_j))$
and some $h_j\in \ell_2(I_j)$ with $\Vert h_j \Vert_2 =1$
such that the sequence of singular numbers
is given by $(\sigma_k)_{k\in I_j}$ and
\[
 e_{n_j}(H_j,{\rm INT}_{h_j}) \,\ge\, \frac{\sigma_{2j-1}}{2}.
\]
We define $H \subset \ell_2(\N)$ as the direct sum of the spaces $H_j$.
Namely, $H$ contains all $f\in\ell_2(\N)$
for which $(f_k)_{k\in I_j} \in H_j$ for all $j\in\N$ 
and for which
\[
 \Vert f \Vert_H \,:=\, \bigg(\sum_{j\in\N} \left\Vert (f_k)_{k\in I_j} \right\Vert_{H_j}^2\bigg)^{1/2}
\]
is finite.
Then the sequence of singular numbers of $H$ in $\ell_2(\N)$ is the sequence~$\sigma$.
We put
\[
 h \,:=\, \sum_{j=1}^\infty  2^{-j/2} h_j,
\]
which satisfies $\Vert h \Vert_2 =1$.
Let $n\ge n_0$ and choose $j\in\N$ such that  $n_{j-1} \le n < n_j$.
Then
\begin{multline*}
e_n(H,{\rm INT}_h)  \,\ge\, e_n\left(H_j,{\rm INT}_h\right)
\,=\,  e_n\left(H_j,2^{-j/2}\, {\rm INT}_{h_j}\right)  \\
\,=\,  2^{-j/2} e_n\left(H_j,{\rm INT}_{h_j}\right) 
\,\ge\,  2^{-j/2} \frac{\sigma_{2j-1}}{2}
\,\ge\,  \tau(n_{j-1})  
\,\ge\,  \tau_n,
\end{multline*}
as claimed.
\end{proof}

\medskip 
\noindent
{\bf Acknowledgments: } \
We thank Mathias Sonnleitner, Mario Ullrich, Tino Ullrich, and two anonymous referees
for valuable comments
that helped to improve this manuscript.

\thebibliography{99}

\bibitem{CD21} 
A.~Cohen and M.~Dolbeault,
Optimal pointwise sampling for $L^2$ approximation, 
arXiv: 2105.05545v2, to appear in J. Complexity.

\bibitem{CM17} 
A.~Cohen and G.~Migliorati, 
Optimal weighted least squares methods, 
SMAI J. Comput. Math. 3, 181--203, 2017. 

\bibitem{DL1993} R.~A.~DeVore and G.~G.~Lorentz, Constructive approximation,
Fundamental Principles of Mathematical Sciences,
303, Springer-Verlag, Berlin, 1993.

\bibitem{DomTik}
\'O. Dom\'\i nguez and S. Tikhonov, Function spaces of logarithmic smoothness: embeddings and characterizations, 
arXiv: 1811.06399, to appear in Mem. Amer. Math. Soc.

\bibitem{ET98} D.~E.~Edmunds and H.~Triebel, Spectral theory for isotropic fractal drums, C. R. Acad. Sci. Paris, S\'er.I, Math. 326, 1269--1274, 1998.

\bibitem{Goldman} M. L. Gol'dman, A description of the trace space for functions of generalized Liouville
class, Dokl. Akad. Nauk SSSR 231, 525--528, 1976.

\bibitem{GS19}
P. J. Grabner and T. A. Stepanyuk,
Upper and lower estimates for numerical integration errors on spheres 
of arbitrary dimension, 
J. Complexity 53, 113--132, 2019. 

\bibitem{HKNPU19}
A.~Hinrichs, D.~Krieg, E.~Novak, J.~Prochno and M.~Ullrich, 
Random sections of ellipsoids and the power of random information, 
%arXiv:1901.06639, to appear in
Trans. Amer. Math. Soc. 374 (12), 8691--8713, 2021.

\bibitem{HKNV} 
A. Hinrichs, D. Krieg, E. Novak and J. Vyb\'\i ral,
Lower bounds for the error of 
quadrature formulas for Hilbert spaces,
J. Complexity  65, 101544, 2021.

\bibitem{HNV08}
A.~Hinrichs, E.~Novak and J.~Vyb\'iral,
\newblock Linear information versus function evaluations 
for $L_2$-approximation,
J. Approx. Theory 153, 97--107, 2008.

\bibitem{KUV}
L.~K\"ammerer, T.~Ullrich and T.~Volkmer, 
Worst-case recovery guarantees for least squares approximation 
using random samples, 
%arXiv:1911.10111, to appear in
Constr. Appr. 54, 295--352, 2021.

\bibitem{Kr19}
D.~Krieg,
Optimal Monte Carlo methods for $L_2$-approximation,
Constr. Appr. 49, 385--403, 2019. 

\bibitem{KU19} 
D.~Krieg and M.~Ullrich,
Function values are enough for $L_2$-approximation,
Found. Comput. Math. 21, 1141--1151, 2021. 

\bibitem{KU21} 
D.~Krieg and M.~Ullrich,
Function values are enough for $L_2$-approximation,
part II, J. Complexity 66, 101569, 2021.

\bibitem{Kud98}
S.~N.~Kudryavtsev,
The best accuracy of reconstruction of finitely smooth functions from their
values at a given number of points,
Izv. Math. 62, 19--53, 1998.

\bibitem{KWW09}
F.~Y.~Kuo, 
G.~W.~Wasilkowski and H.~Wo\'zniakowski, 
On the power of standard information
for multivariate approximation in the worst case setting, 
J. Approx. Theory 158, 97--125, 2009. 

\bibitem{Levy}
P.~L\'evy, Th\'eorie de l’addition des variables al\'eatoires,
Monographies des Probabilit\'es; calcul des probabilit\'es et ses applications,
publi\'ees sous la direction de E. Borel, no. 1. Paris: Gauthier-Villars, 1937.

\bibitem{Moura}
S. Moura, Function spaces of generalised smoothness, Diss. Math. 398, 1--88, 2001.

\bibitem{NSU20}
N.~Nagel, M.~Sch\"afer and T.~Ullrich, 
A new upper bound for sampling numbers, arXiv:2010.00327, to appear in Found. Comput. Math.

\bibitem{NW10} 
E.~Novak and H.~Wo\'zniakowski,
Tractability of Multivariate Problems,
Volume II: Standard Information for 
Linear Functionals.  
European Mathematical Society, 2010.

\bibitem{NW12} 
E.~Novak and H.~Wo\'zniakowski,
Tractability of Multivariate Problems,
Volume III: Standard Information for Operators. 
European Mathematical Society, 2012. 

\bibitem{NW16} 
E.~Novak and H.~Wo\'zniakowski,
Tractability of multivariate problems
for standard and linear information in the worst case setting: 
part I, 
J. Approx. Theory 207, 177--192, 2016.

\bibitem{PPST18} 
G.~Plonka, D.~Potts, G.~Steidl, and M.~Tasche,
Numerical Fourier Analysis,  
Birkhäuser/Springer, Cham, 2018.

\bibitem{Te20}
V.~N.~Temlyakov,
On optimal recovery in $L_2$,
J. Complexity 65, 101545, 2021.  

\bibitem{TU20}
V.~Temlyakov and T.~Ullrich, 
Bounds on Kolmogorov widths and sampling recovery 
for classes with small 
mixed smoothness, 
%arXiv: 2012.09925, to appear in
J. Complexity 67, 101575, 2021.

\bibitem{TU21}
V.~Temlyakov and T.~Ullrich, 
Approximation of functions with small mixed smoothness 
in the uniform norm, 
J. Approx. Theory 277, 105718, 2022.
%arXiv: 2012.11983v2. 

\bibitem{V} J. Vyb\'\i ral, A variant of Schur's product 
theorem and its applications,
Adv. Math. 368, 107140, 2020.

\bibitem{WW01} 
G.~W.~Wasilkowski and H.~Wo\'zniakowski, 
On the power of standard information for weighted  
approximation,
Found. Comput. Math. 1, 417--434, 2001. 

\bibitem{WW07}
G.~W.~Wasilkowski and H.~Wo\'zniakowski, 
The power of standard information for multivariate 
approximation in the randomized setting, 
Math. Comp. 76, 965--988, 2007.

\end{document}